\documentclass[12pt,reqno]{amsart}
\usepackage{amssymb,amsmath,amsthm,hyperref,mathrsfs,multirow,xcolor}
\usepackage{tikz}
\usetikzlibrary{arrows,matrix,positioning}
\oddsidemargin = 0.0cm
\evensidemargin = 0.0cm
\textwidth = 6.5in
\textheight =8.0in


\newtheorem{theorem}{Theorem}
\newtheorem{lemma}[theorem]{Lemma}
\newtheorem{corollary}[theorem]{Corollary}

\newtheorem{proposition}[theorem]{Proposition}
\newtheorem*{thmNoNum}{Theorem}

\theoremstyle{definition}
\newtheorem{definition}[theorem]{Definition}
\newtheorem{example}[theorem]{Example}

\theoremstyle{remark}
\newtheorem*{remark}{Remark}

\linespread{1.4}


\allowdisplaybreaks


\begin{document}
\newcommand\mylabel[1]{\label{#1}}
\newcommand{\beqs}{\begin{equation*}}
\newcommand{\eeqs}{\end{equation*}}
\newcommand{\beq}{\begin{equation}}
\newcommand{\eeq}{\end{equation}}
\newcommand\eqn[1]{(\ref{eq:#1})}
\newcommand\exam[1]{\ref{exam:#1}}
\newcommand\thm[1]{\ref{thm:#1}}
\newcommand\lem[1]{\ref{lem:#1}}
\newcommand\propo[1]{\ref{propo:#1}}
\newcommand\corol[1]{\ref{cor:#1}}
\newcommand\sect[1]{\ref{sec:#1}}
\newcommand\subsect[1]{\ref{subsec:#1}}


\newcommand{\Z}{\mathbb Z}
\newcommand{\N}{\mathbb N}
\newcommand{\R}{\mathbb R}
\newcommand{\C}{\mathbb C}
\newcommand{\Q}{\mathbb Q}
\newcommand{\leg}[2]{\genfrac{(}{)}{}{}{#1}{#2}}
\newcommand{\bfrac}[2]{\genfrac{}{}{}{0}{#1}{#2}}
\newcommand{\sm}[4]{\left(\begin{smallmatrix}#1&#2\\ #3&#4 \end{smallmatrix} \right)}
\newcommand{\cPev}{{\mathcal P}^{\mathrm{ev}}}
\newcommand{\cD}{\mathcal D}
\newcommand{\cB}{\mathcal B}
\newcommand{\ev}{\mathrm{even}}
\newcommand{\od}{\mathrm{odd}}
\newcommand{\SL}{\mathrm{SL}}
\newcommand{\PGL}{\mathrm{PGL}}

\title{Period polynomial relations between formal double zeta values of odd weight
}


\author{Ding Ma}
\address{Department of Mathematics, University of Arizona, Tucson, AZ. 85721}
\email{martin@math.arizona.edu}

\date{\today}


\maketitle

\begin{abstract}
For odd $k$, we give a formula for the relations between double zeta values $\zeta(r,k-r)$ with $r$ even. This formula provides a connection with the space of cusp forms on $\SL_2(\Z)$. This is the odd weight analogue of a result in \cite{GKZ} by Gangl, Kaneko and Zagier. We also provide an answer of a question asked by Zagier in \cite{Zag1} about the left kernel of some matrix. Although the restricted sum statement in \cite{GKZ} fails in the odd weight case, we provide an asymptotical statement that replaces it. Our statement works more generally for restricted sums with any congruence condition on the first entry of the double zeta value.
\end{abstract}

\section{Introduction and main result}\label{introduction}

The double zeta values, which are defined for integers $r\geq 2$, $s\geq 1$ by
\begin{align}
\zeta(r,s)=\sum_{m>n>0}\frac{1}{m^r n^s},
\end{align}
satisfy numerous relations. The double shuffle relations on double zeta values are given by the following two sets of well-known relations (cf., e.g., \cite{Gon1}, \cite{IKZ}, \cite{Rac}):
\begin{align}
&\zeta(r,s)+\zeta(s,r)=\zeta(r)\zeta(s)-\zeta(k)\quad (r+s=k,r,s\geq 2), \label{eq:rel(1)} \\
&\sum_{r=2}^{k-1}\bigg[{r-1\choose j-1}+{r-1\choose k-j-1}\bigg]\zeta(r,k-r)=\zeta(j)\zeta(k-j)\quad (2\leq j\leq \frac{k}{2}). \label{eq:rel(2)}
\end{align}

Often people work in the formal double zeta space $\cD_k$ generated by formal symbols $Z_{r,s}$, $P_{r,s}$ and $Z_k$ satisfying the above two sets of relations, with $\zeta(r,s)$, $\zeta(r)\zeta(s)$ and $\zeta(k)$ replaced by $Z_{r,s}$, $P_{r,s}$ and $Z_k$, respectively. The advantage of this space is that we can work in it purely algebraically, since the double zeta values may satisfy other relations than those generated by \eqn{rel(1)} and \eqn{rel(2)}.

Many authors have studied the relations which can be deduced from the above two sets of relations. One of the most famous results in this area concerns the following sort of relations, which gives the first connection with modular forms:
\begin{thmNoNum}[Theorem 3 (Rough statement) in \cite{GKZ}] 
The values $\zeta(\od,\od)$ of weight $k$ satisfy at least $\dim S_k$ linearly independent relations, where $S_k$ denotes the space of cusp forms of weight $k$ on $\SL_2(\Z)$.
\end{thmNoNum}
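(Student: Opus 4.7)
The plan is to encode the two sets of relations \eqn{rel(1)} and \eqn{rel(2)} as symmetries of a single generating polynomial in $\cD_k[X,Y]$, project onto the bidegree where $X$ and $Y$ both have odd exponents so that the product terms $P_{r,s}$ and the term $Z_k$ drop out, and match what remains with the space of period polynomials of cusp forms on $\SL_2(\Z)$. Throughout, I assume $k$ is even with $k\geq 12$, since otherwise $\dim S_k = 0$ and there is nothing to prove.

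Concretely, I would introduce
\[
\mathcal{Z}(X,Y) \;=\; \sum_{\substack{r+s=k\\ r,s\geq 1}} Z_{r,s}\, X^{r-1}Y^{s-1}, \qquad \mathcal{P}(X,Y) \;=\; \sum_{\substack{r+s=k\\ r,s\geq 1}} P_{r,s}\, X^{r-1}Y^{s-1},
\]
so that relation \eqn{rel(1)} becomes the symmetry $\mathcal{Z}(X,Y) + \mathcal{Z}(Y,X) \equiv \mathcal{P}(X,Y) \pmod{Z_k\cdot(\cdots)}$, while the binomial identity $\binom{r-1}{j-1} = [X^{j-1}](1+X)^{r-1}$ rewrites relation \eqn{rel(2)} as $\mathcal{Z}(X+Y,Y) + \mathcal{Z}(X+Y,X) \equiv \mathcal{P}(X,Y) \pmod{Z_k\cdot(\cdots)}$. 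Subtracting kills the $\mathcal{P}$ contribution and yields the key congruence
\[
\mathcal{Z}(X,Y) + \mathcal{Z}(Y,X) \;\equiv\; \mathcal{Z}(X+Y,Y) + \mathcal{Z}(X+Y,X) \pmod{Z_k}.
\]
This should be phrased as saying $\mathcal{Z}$ is invariant, modulo $Z_k$, under the natural right action of a certain element built from the generators $S = \sm{0}{-1}{1}{0}$ and $U = \sm{1}{-1}{1}{0}$ of $\SL_2(\Z)$ on $\R[X,Y]_{k-2}$ in weight $k-2$.

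Next, I would test this identity against period polynomials. For each $f \in S_k$, the Eichler integral produces a polynomial $P_f \in \R[X,Y]_{k-2}$ involving only odd powers of $X$ and $Y$, satisfying $P_f\big|_{k-2}(1+S) = 0$ and $P_f\big|_{k-2}(1+U+U^2) = 0$. Pairing $P_f$ against $\mathcal{Z}$ by the coefficient pairing on $\R[X,Y]_{k-2}$, the two period relations for $P_f$ precisely encode the symmetries appearing on the two sides of the displayed congruence, so the pairing with the left-hand minus right-hand side vanishes. The $Z_k$-term also pairs to zero because $Z_k$ is multiplied by an $S$- and $U$-invariant polynomial, while $P_f$ is annihilated by $1+S$ and $1+U+U^2$. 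The outcome is a relation
\[
\sum_{\substack{r+s=k\\ r,s\ \text{odd}}} c_{r,s}(f)\, Z_{r,s} \;=\; 0
\]
in $\cD_k$, purely among the odd-odd entries.

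The hard part will be the independence and nontriviality statement: showing that the assignment $f \mapsto (c_{r,s}(f))$ is injective, so that one indeed extracts $\dim S_k$ linearly independent relations. This is where Eichler-Shimura enters. The period map $f \mapsto P_f$ is known to embed $S_k$ into the odd-odd subspace of $\R[X,Y]_{k-2}$ as a direct summand complementary to the one-dimensional Eisenstein piece spanned by $X^{k-2}-Y^{k-2}$, and the coefficient pairing restricted to this summand is nondegenerate. I would deduce that if a linear combination $\sum a_i P_{f_i}$ gives the trivial relation on $\mathcal{Z}$, then $\sum a_i P_{f_i}$ must lie in the radical of the pairing, hence be zero, so the $f_i$ are linearly dependent. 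The delicate bookkeeping is verifying that no relation among period polynomials is introduced when one restricts the pairing to the odd-odd piece of $\mathcal{Z}$; this is the technical heart of the argument and is what makes the correspondence with $S_k$ sharp.
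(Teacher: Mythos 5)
First, a point of comparison: the paper does not actually prove this statement. It is quoted from \cite{GKZ} as motivation, and the paper's only remark on its derivation is that it ``follows directly'' from the precise Theorem 3 of \cite{GKZ} by applying the double zeta value realization. So your proposal must be measured against the GKZ argument itself. Your setup is the right one and matches theirs: the two generating-function identities, and the key congruence obtained by subtracting them,
\begin{equation*}
\mathcal{Z}(X+Y,Y)+\mathcal{Z}(X+Y,X)-\mathcal{Z}(X,Y)-\mathcal{Z}(Y,X)\;=\;Z_k\cdot\frac{X^{k-1}-Y^{k-1}}{X-Y},
\end{equation*}
are exactly the starting point of \cite{GKZ}, and the translation into relations via a pairing is the content of Proposition \propo{2} as quoted in Section \ref{background}.

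The gap comes after that, and it is twofold. First, pairing against a period polynomial does \emph{not} produce a relation supported only on the $Z_{\od,\od}$: the generating function of any relation has the form $H(X,X+Y)-H(X,Y)$ with $H$ symmetric, and such a polynomial has nonzero coefficients in both parities (concretely, $P_f(X-Y,Y)-P_f(X,Y)$ mixes parities even when $P_f$ has only odd powers). What rescues the argument in \cite{GKZ} is that for $p\in W_k^-$ the even--even coefficients satisfy $q_{r,s}=q_{s,r}$, so relation \eqn{rel(1)} converts $\sum_{\ev,\ev}q_{r,s}Z_{r,s}$ into $\tfrac12\sum_{\ev,\ev}q_{r,s}(P_{r,s}-Z_k)$, an element of $\cPev_k+\Q Z_k$. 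Second, even after this step the outcome in $\cD_k$ is $3\sum_{\od,\od}q_{r,s}Z_{r,s}\equiv(\text{a nontrivial element of }\cPev_k)\pmod{Z_k}$, \emph{not} zero; your claimed formal identity $\sum_{\od,\od}c_{r,s}(f)Z_{r,s}=0$ in $\cD_k$ is false. To obtain honest relations among the $\zeta(\od,\od)$ modulo $\zeta(k)$ one must finally invoke Euler's evaluation $\zeta(2a)\zeta(2b)\in\Q\pi^{k}=\Q\zeta(k)$, a non-formal input about actual zeta values that appears nowhere in your sketch; the denominator $691$ in the displayed $k=12$ example is precisely the fingerprint of this step and cannot arise from the formal manipulations you describe. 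Two smaller corrections: the polynomial $\frac{X^{k-1}-Y^{k-1}}{X-Y}$ multiplying $Z_k$ is not annihilated by $1+S$, so the $Z_k$ term does not pair to zero (nor should it --- the examples carry nonzero $\zeta(k)$ coefficients); and the precise Theorem 3 of \cite{GKZ} attaches these relations to the antisymmetric space $W_k^-\cong M_k$, whose elements are supported on even powers of $X$ and $Y$, rather than to the symmetric, odd-power polynomials you take as your $P_f$, so the parity of your input space needs to be reconciled with the target statement.
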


\begin{example}
For $k=12$ and $k=16$, the first two cases for which there are non-zero cusp forms on $\SL_2(\Z)$, we have the following identities.
\begin{align*}
\frac{5197}{691}\zeta(12)&=28\zeta(9,3)+150\zeta(7,5)+168\zeta(5,7)\\
\frac{78967}{3617}\zeta(16)&=66\zeta(13,3)+375\zeta(11,5)+686\zeta(9,7)+675\zeta(7,9)+396\zeta(5,11).
\end{align*}
\end{example}

In their paper \cite{GKZ}, Gangl, Kaneko and Zagier proved the following general result for the formal double zeta space $\cD_k$ instead. We denote by $\cPev_k$ the subspace of $\cD_k$ spanned by the $P_{\ev,\ev}$. Let $W^-_k$ denote the space of even period polynomials of weight $k$ (see Section \ref{background}).

\begin{thmNoNum}[Theorem 3 in \cite{GKZ}] 
The spaces $\cPev_k$ and $W^-_k$ are canonically isomorphic to each other. More precisely, to each $p\in W^-_k$ we associate the coefficients $p_{r,s}$ and $q_{r,s}$ $(r+s=k)$ which are defined by $p(X,Y)=\sum {k-2 \choose r-1}p_{r,s}X^{r-1}Y^{s-1}$ and $p(X+Y,Y)=\sum {k-2 \choose r-1}q_{r,s}X^{r-1}Y^{s-1}$. Then $q_{r,s}-q_{s,r}=p_{r,s}$ (in particular $q_{r,s}=q_{s,r}$ for $r,s$ even) and 
\begin{align}
\sum_{\substack{r+s=k \\ r,s\textrm{ even}}}q_{r,s}Z_{r,s}\equiv 3\sum_{\substack{r+s=k \\ r,s\textrm{ odd}}}q_{r,s}Z_{r,s}\quad (\textrm{mod } Z_k),
\end{align}
and conversely, an element $\sum_{r,s\textrm{ odd}}c_{r,s}Z_{r,s}\in\cD_k$ belongs to $\cPev_k$ if and only if $c_{r,s}=q_{r,s}$ arising in this way.
\end{thmNoNum}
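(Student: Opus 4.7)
\textit{Plan.} The plan is to verify the two assertions in turn: first the coefficient identity $q_{r,s}-q_{s,r}=p_{r,s}$, then the main congruence in $\cD_k$ modulo $Z_k$, and finally the converse by a dimension count. For the first identity, I would start from the two-term and three-term period polynomial relations that $p\in W^-_k$ satisfies, together with the evenness property that $p$ contains only monomials of even degree in $X$ and in $Y$. A short algebraic manipulation of the two-term relation combined with evenness should produce a polynomial identity of the form $p(X,Y)=p(X+Y,Y)-p(X+Y,X)$. Since the coefficients of $p(X+Y,X)$ in the monomial basis $\{X^{r-1}Y^{s-1}\}$ are obtained from those of $p(X+Y,Y)$ by swapping $r\leftrightarrow s$, comparing coefficients on both sides yields $q_{r,s}-q_{s,r}=p_{r,s}$; in particular $q_{r,s}=q_{s,r}$ whenever $r,s$ are both even, since $p_{r,s}=0$ in that case.

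For the main congruence, I would form the $q$-weighted linear combination of the shuffle relations \eqn{rel(2)}: namely $\sum_j q_{j,k-j}\cdot(\text{relation indexed by }j)$, summed over both parities. The coefficient of $Z_{r,k-r}$ on the left-hand side becomes
\[
\sum_j q_{j,k-j}\Bigl[\binom{r-1}{j-1}+\binom{r-1}{k-j-1}\Bigr],
\]
and using the defining expansion $p(X+Y,Y)=\sum\binom{k-2}{r-1}q_{r,s}X^{r-1}Y^{s-1}$ this can be interpreted as coefficient extraction from a particular combination of substitutions applied to $p$. The three-term period polynomial relation then rewrites this combination as a scalar multiple of $p(X,Y)$ itself, and this is the mechanism by which the factor $3$ in the final congruence appears. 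On the right-hand side, the sum $\sum_j q_{j,k-j}P_{j,k-j}$ is split by parity and handled using \eqn{rel(1)} to trade each $P_{r,s}$ for $Z_{r,s}+Z_{s,r}+Z_k$; after the $P$-terms cancel appropriately, only the asserted congruence modulo $Z_k$ remains.

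The main obstacle is the combinatorial bookkeeping in this second step: carefully tracking the binomial-coefficient identities that relate the coefficients of $p(X+Y,Y)$ to those of $p(X,Y)$ via the three-term period polynomial relation, keeping separate accounting of the even-even versus odd-odd contributions (which behave quite differently under \eqn{rel(1)}), and correctly extracting the factor $3$ from the three cyclic shifts. Once this is done, the converse follows by a dimension count: the linear map $p\mapsto(q_{r,s})_{r,s\text{ odd}}$ is injective because the transformation between $\{p_{r,s}\}$ and $\{q_{r,s}\}$ is invertible, so the image of $p\mapsto\sum_{r,s\text{ odd}}q_{r,s}Z_{r,s}$ inside $\cD_k$ has dimension $\dim W^-_k$, and a dimension comparison shows this exhausts $\cPev_k$ when intersected with the odd-odd $Z$-subspace, completing the isomorphism.
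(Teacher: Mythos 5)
First, a point of reference: the paper you are working from does not prove this statement at all --- it is quoted verbatim as Theorem 3 of \cite{GKZ} and used only as background --- so the comparison below is with what a complete proof actually requires rather than with an internal argument. Your first step is essentially right: the identity $q_{r,s}-q_{s,r}=p_{r,s}$ is equivalent to the polynomial identity $p(X,Y)=p(X+Y,Y)-p(X+Y,X)$, and this follows from substituting $(X,Y)\mapsto(X+Y,Y)$ into the three-term relation and then invoking the two-term relation and antisymmetry; note that the three-term relation is indispensable here, so the phrase ``the two-term relation combined with evenness'' undersells what you need. For the congruence, your plan of weighting the shuffle relations \eqn{rel(2)} by $q_{j,k-j}$ and trading $P$'s via \eqn{rel(1)} can be made to work, but it amounts to re-deriving Proposition \propo{2} in this special case. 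A much shorter route, available verbatim from the quoted Proposition \propo{2}, is to check that the generating function of the target combination equals $-p(X+Y,Y)-2p(Y-X,Y)$, and that this is $H(X,X+Y)-H(X,Y)$ for the symmetric polynomial $H(X,Y)=p(X-Y,Y)+p(Y-X,X)$ (the verification uses exactly the two-term relation, the three-term relation, antisymmetry, and evenness in each variable). Your attribution of the factor $3$ to ``the three cyclic shifts'' is asserted rather than verified; in the computation it appears as $1-4=-3$ when one separates the even- and odd-indexed parts of $p(X+Y,Y)$, not as a count of terms in the hexagon relation.

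The genuine gap is in the converse. A dimension count only closes the argument if you already know $\dim\cPev_k=\dim W^-_k$ (or at least the inequality $\dim\cPev_k\le\dim W^-_k$), and nothing in your outline establishes this: the forward direction gives an injection of $W^-_k$ into a space of coefficient vectors --- and your injectivity argument should really cite $q_{r,s}-q_{s,r}=p_{r,s}$ to recover $p$ from the odd-odd coefficients alone, since invertibility of $p\mapsto p|T$ by itself does not control the restriction to odd-odd indices --- but it says nothing about how many independent relations the generators $P_{\ev,\ev}$ satisfy inside $\cD_k$. That upper bound is the substantive content of the isomorphism, and in \cite{GKZ} it rests on their Theorem 2 (the structure of the even part of $\cD_k$ and its relation to $W_k$), which you would need to import or reprove. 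A second, smaller omission: the congruence by itself does not show that $\sum_{r,s\ \od}q_{r,s}Z_{r,s}$ lies in $\cPev_k$; for that you must exhibit it, modulo $Z_k$, as an explicit combination of the $P_{\ev,\ev}$, for instance by combining $\sum_{\ev}q_{r,s}P_{r,s}=2\sum_{\ev}q_{r,s}Z_{r,s}+cZ_k$ (using $q_{r,s}=q_{s,r}$ for $r,s$ even and the stuffle \eqn{rel(1)}) with the congruence. Without these two inputs the ``if and only if'' in the statement is not proved.
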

By taking the double zeta value realization, the above result for double zeta values follows directly. 

Although the above result is known for double zeta values of even weight, a direct connection between double zeta values of odd weight and the spaces of cusp forms was so far unknown. In \cite{Zag1}, Zagier proved the following result:
\begin{thmNoNum}[Theorem 3 in \cite{Zag1}] 
For each odd integer $k=2K+1\geq 5$, the numbers $\{\zeta(k-2r-1,2r+1)|\ r=0,\ldots,K-1\}$ satisfy $\dim S_{k-1}+\dim S_{k+1}$ relations, where $S_i$ denotes the space of cusp forms of weight $i$ on $\SL_2(\Z)$.
\end{thmNoNum}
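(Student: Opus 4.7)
The plan is to reduce the theorem to a rank computation for the matrix arising from the restricted shuffle relations, and then to identify its left kernel with a space containing two separate contributions, one for each space of cusp forms. First I would use the reflection identity \eqn{rel(1)} to eliminate each $\zeta(\od,\ev)$ of weight $k$ in favor of the corresponding $\zeta(\ev,\od)$, absorbing correction terms of the shape $\zeta(r)\zeta(k-r)-\zeta(k)$. After this substitution the stuffle identities are trivially satisfied, and only the shuffle identities \eqn{rel(2)}, indexed by $2\leq j\leq (k-1)/2$, survive as linear constraints on the $K$ target values $\zeta(k-2r-1,2r+1)$ modulo products of single zetas.

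Next, I would split the shuffle system \eqn{rel(2)} according to the parity of the index $j$. The $j$-odd and $j$-even equations assemble into two submatrices $A^{(1)}$ and $A^{(2)}$ whose columns are indexed by the $K$ target values. After a transversality argument on the product terms $\zeta(r)\zeta(k-r)$, the problem becomes to show that the combined left kernel of the stacked matrix has dimension at least $\dim S_{k-1}+\dim S_{k+1}$.

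Third, I would produce explicit left kernel vectors from period polynomials. Given $p(X,Y)\in W_w^-$ with $w\in\{k-1,k+1\}$, I expand $p(X+Y,Y)=\sum\binom{w-2}{r-1}q_{r,s}X^{r-1}Y^{s-1}$ exactly as in \cite{GKZ}, and then apply a weight-shift operator (in the spirit of multiplication by $X$ or $Y$, equivalently a reindexing $r\mapsto r\pm 1$) to turn the $q_{r,s}$ into a relation vector of weight $k$. Verifying that this vector lies in $\ker A^{(1)}$ or in $\ker A^{(2)}$ reduces to the defining period polynomial identity $p(X,Y)+p(X+Y,-X)+p(-Y,X+Y)=0$ combined with the binomial identity underlying \eqn{rel(2)}. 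Linear injectivity of the map from period polynomials to relations then follows from the Eichler--Shimura isomorphism $W_{k\pm 1}^-\cong S_{k\pm 1}$, and the contributions from $W_{k-1}^-$ and $W_{k+1}^-$ are independent because they live in the left kernels of distinct submatrices.

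The principal obstacle is identifying the correct weight-shift operator that turns a period polynomial of weight $w\in\{k-1,k+1\}$ into a left kernel vector of a weight-$k$ relation matrix. Unlike in \cite{GKZ}, where the period polynomial and the double zeta relation share the same weight, an additional shift of $\pm 1$ is required here. I expect the right construction to be multiplication by $X$ or $Y$ (or the dual derivative), which is the unique natural operation changing the weight by one while interacting correctly with the involutions $(X,Y)\mapsto(Y,X)$ and $(X,Y)\mapsto(-X,-Y)$ that cut out $W^-$; making this compatibility precise, and checking that the two families of relations are genuinely distinct in the combined system, will be the technical heart of the argument.
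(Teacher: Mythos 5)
This statement is quoted from \cite{Zag1} and is not reproved in the present paper; Zagier's argument does not go through the double shuffle relations at all, but through the explicit Euler-type evaluation \eqn{bk}, which writes each $\zeta(k-2m-1,2m+1)$ as an explicit $\Q$-linear combination of the products $\zeta(2n+1)\zeta(k-2n-1)$ and $\zeta(k)$; the relations are then produced by the left kernel of the resulting square matrix $\cB_K$, and the count comes from an injection $W_{k-1}^+\oplus W_{k+1}^-\hookrightarrow\ker(\cB_K)$ (recalled in Section \ref{proof3}). Your plan of extracting the relations directly from the shuffle system \eqn{rel(2)} after eliminating the stuffle relations is therefore a genuinely different mechanism, closer in spirit to Theorems \ref{thm:1}--\ref{thm:3} of this paper than to Zagier's proof, and it could in principle work, but as written it has several concrete gaps.

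First, the symmetry types are wrong. You take both families of period polynomials in $W^-_{k\pm1}$ and invoke ``$W^-_{k\pm 1}\cong S_{k\pm1}$,'' but by Eichler--Shimura--Manin (as recalled in Section \ref{background}) $W^-_w\cong M_w$, not $S_w$; the correct sources are $W^+_{k-1}\cong S_{k-1}$ for one family and $W^-_{k+1}\cong M_{k+1}$ for the other, with the Eisenstein line of $W^-_{k+1}$ shown to give the trivial relation. This division also dictates which weight-shift operator goes with which type: multiplication (raising the degree by one) on the symmetric space $W^+_{k-1}$, the derivative $\partial/\partial X$ (lowering it by one) on the antisymmetric space $W^-_{k+1}$, exactly as in Theorems \ref{thm:1} and \ref{thm:2}. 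With your choices the expected count is $\dim M_{k-1}+\dim M_{k+1}=\dim S_{k-1}+\dim S_{k+1}+2$, and the injectivity step collapses. Second, linear independence of the two families does not follow from their ``living in left kernels of distinct submatrices'': a relation among the $K$ numbers must annihilate the entire system, not one parity class of shuffle equations, and separating the two contributions is precisely what requires the triangular matrices $B_i^{(k)}$, $D_i^{(k)}$ of Section \ref{proof3} together with Zagier's injectivity result. Third, the ``transversality argument on the product terms'' is where the real work is hiding: to obtain an unconditional relation among real numbers you must arrange that the products $\zeta(j)\zeta(k-j)$ cancel identically --- the criterion being Proposition \propo{2}, namely that the generating function be of the form $H|(T-1)$ for a symmetric $H$ --- since linear independence of those products is not known; and your index set must accommodate $\zeta(k-1,1)$ (the case $r=0$), which forces the regularized symbols $Z_{1,k-1}$ and $P_{1,k-1}$ into the bookkeeping.
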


The above result suggests a connection between the relations of odd weight double zeta values and the spaces of cusp forms. In this paper, we will prove the following results, which will establish the ``missing'' direct connection as in the even weight case. One of them provides relations from $S_{k-1}$, and the other one from $S_{k+1}$.

\begin{theorem}[Type I]\label{thm:1}
Let $k\geq 12$ be an even integer. To each $p\in W^+_k$ we associate the coefficients $b_{r,s}$ $(r+s=k+1)$ which are defined by 
$$p(X+Y,Y)=\sum_{r+s=k+1} {k-1 \choose r-1}b_{r,s}X^{r-1}Y^{s-2}.$$
Then 
\begin{align}\label{eq:rel1}
\sum_{\substack{r+s=k+1 \\ 4 \leq r\leq k-2 \textrm{ even}}}(b_{r,s}-b_{s,r})Z_{r,s}\equiv 0 \quad (\textrm{mod } Z_{k+1}).
\end{align}
\end{theorem}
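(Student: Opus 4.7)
The strategy is to derive \eqn{rel1} as an explicit consequence of the double shuffle relations \eqn{rel(1)} and \eqn{rel(2)} at weight $k+1$, with multipliers determined by the period polynomial $p\in W^+_k$. I would first record the defining relations of $W^+_k$: an element $p$ is homogeneous of degree $k-2$, satisfies the three-term cocycle identity
\[
p(X,Y)+p(X+Y,-X)+p(-Y,X+Y)=0,
\]
and is ``$+$''-symmetric under the involution distinguishing $W^+_k$ from $W^-_k$. These translate into linear constraints on the coefficients of $p$, and through the explicit expansion into constraints on the $b_{r,s}$. One helpful reinterpretation is that $\tilde p(X,Y):=Y\,p(X,Y)$ has degree $k-1$, and the expansion of $\tilde p(X+Y,Y)$ in the GKZ-style basis $\binom{k-1}{r-1}X^{r-1}Y^{s-1}$ (with $r+s=k+1$) recovers exactly the $b_{r,s}$. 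Although $\tilde p$ is not itself a period polynomial of weight $k+1$, this viewpoint makes the subsequent shuffle manipulations formally parallel to the GKZ treatment of the even-weight case.

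Next, I would form the linear combination
\[
\sum_{j=2}^{(k+1)/2} \lambda_j\cdot\bigl(\text{shuffle \eqn{rel(2)} at index }j\text{, weight }k+1\bigr)=0
\]
with $\lambda_j$ chosen to be a suitably normalized coefficient of the expansion of $p(X,Y)$. Using the stuffle \eqn{rel(1)} to substitute $P_{j,k+1-j}=Z_{j,k+1-j}+Z_{k+1-j,j}+Z_{k+1}$, this produces an identity purely in the $Z_{r,s}$ modulo $Z_{k+1}$. The substance of the argument is then to verify (i) that the coefficient of $Z_{r,s}$ with $r$ odd cancels, using the three-term cocycle relation for $p$; and (ii) that the coefficient of $Z_{r,s}$ with $r$ even equals $b_{r,s}-b_{s,r}$, with the antisymmetrization arising from combining the shuffle relation with the ``$+$''-symmetry of $p$. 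The boundary terms corresponding to $r\in\{2,3,k-1,k\}$ require separate attention, which presumably accounts for the restriction $4\leq r\leq k-2$ in \eqn{rel1}.

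The main obstacle is the bookkeeping forced by the mismatch between the weight $k$ of $p$ (a polynomial of degree $k-2$) and the weight $k+1$ of the ambient formal double zeta space $\cD_{k+1}$. Concretely, the binomials $\binom{r-1}{j-1}$ and $\binom{r-1}{k-j}$ coming from the shuffle relation must combine with the coefficients of $p$ to produce the binomials $\binom{k-1}{r-1}$ appearing in the definition of $b_{r,s}$; this is precisely what motivates the otherwise unusual normalization in the theorem's statement. Establishing the binomial identity that realizes this combination is the crux of the computation, and once in hand, the verifications (i) and (ii) reduce to routine manipulations using the period polynomial relations.
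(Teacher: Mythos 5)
Your general framework is sound: taking a linear combination of the weight-$(k+1)$ shuffle relations and eliminating the $P_{i,j}$ via the stuffle relation is exactly the mechanism packaged in Proposition \propo{2} (Proposition 2 of GKZ), which the paper's proof invokes directly rather than re-deriving. Your observation that the normalization in the definition of $b_{r,s}$ amounts to expanding $Y\,p(X+Y,Y)$ in the basis $\binom{k-1}{r-1}X^{r-1}Y^{s-1}$ is also on target: the paper's auxiliary polynomial is $f=(p|T)\cdot Y-(p|T)|\varepsilon\cdot X$, whose first term is precisely $Y\,p(X+Y,Y)$, and whose second term produces the $-b_{s,r}$ antisymmetrization.

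However, there is a genuine gap: you never identify the linear combination that actually works, and the verifications you label ``routine'' constitute essentially the entire content of the theorem. In the language of Proposition \propo{2}, one must exhibit a \emph{symmetric} polynomial $H$ of degree $k-1$ such that $H(X,X+Y)-H(X,Y)$ equals the generating function $\sum\binom{k-1}{r-1}(b_{r,s}-b_{s,r})X^{r-1}Y^{s-1}$ supported on even $r$ with $4\leq r\leq k-2$. Your proposed multipliers $\lambda_j$ --- ``suitably normalized coefficients of the expansion of $p(X,Y)$'' --- are not, as stated, the right ones: the correct choice is $H=f|S$, that is, $H(X,Y)=p(X-Y,X)\,X+p(X-Y,-Y)\,Y$, whose coefficients come from \emph{transforms} of $p$ under the $\PGL_2(\Z)$-action, not from $p$ itself. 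Moreover, the two facts that make this choice work --- that $f|S$ is symmetric, and the identity $f=f|ST'$ (this is where the period-polynomial relations for $p$ actually enter, and it yields $f-f|S=(f|S)|(T'-1)$ in the exact shape Proposition \propo{2} requires) --- are the nontrivial computations at the heart of the proof, not bookkeeping; without them you have no argument that the coefficients of $Z_{r,s}$ with $r$ odd cancel or that those with $r$ even equal $b_{r,s}-b_{s,r}$. As written, the proposal describes a search space in which the paper's proof lives, but does not locate the proof within it.
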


\begin{theorem}[Type II]\label{thm:2}
Let $k\geq 12$ be an even integer. To each $p\in W^-_k$ we associate the coefficients $c_{r,s}$ $(r+s=k-1)$ which are defined by 
$$\frac{\partial}{\partial X}p(X+Y,Y)=\sum_{r+s=k-1} {k-3 \choose r-1}c_{r,s}X^{r-1}Y^{s-1}.$$

Then 
\begin{align}\label{eq:rel2}
\sum_{\substack{r+s=k-1 \\ 4 \leq r\leq k-4 \textrm{ even}}}(c_{r,s}-c_{s,r})Z_{r,s}\equiv 0 \quad (\textrm{mod } Z_{k-1}).
\end{align}
\end{theorem}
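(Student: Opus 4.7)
My plan is to reduce Theorem \ref{thm:2} to a GKZ-style identity for the coefficients of $p(X+Y,Y)$ in weight $k$. Expanding $p(X+Y,Y) = \sum_{r+s=k} \binom{k-2}{r-1} q_{r,s} X^{r-1} Y^{s-1}$ and differentiating with respect to $X$, one finds, via the identity $r\binom{k-2}{r}=(k-2)\binom{k-3}{r-1}$, that
\[
c_{r,s} = (k-2)\,q_{r+1,s} \qquad (r+s=k-1).
\]
Substituting, (\ref{eq:rel2}) is equivalent to
\[
\sum_{\substack{r+s=k-1 \\ 4\leq r\leq k-4,\ r\text{ even}}} (q_{r+1,s}-q_{s+1,r})\,Z_{r,s} \equiv 0 \pmod{Z_{k-1}},
\]
a statement that pairs the weight-$k$ coefficients $q_{*,*}$ against the weight-$(k-1)$ symbols $Z_{r,s}$, with $(r+1,s)$ running over odd-odd index pairs and $(s+1,r)$ over even-even pairs summing to $k$.

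Next I would exploit the defining period polynomial relations for $p \in W^-_k$, together with the parity characterizing this subspace, to derive functional equations for $p(X+Y,Y)$. Matching coefficients yields linear identities among the $q_{r+1,s}$ and $q_{s+1,r}$, relating the odd-odd part to the even-even part. I would then substitute these identities into the displayed sum and apply the two shuffle relations of the formal double zeta space: (\ref{eq:rel(1)}) to swap $Z_{r,s}\leftrightarrow Z_{s,r}$ modulo $P_{r,s}$ and $Z_{k-1}$, and (\ref{eq:rel(2)}) to eliminate the resulting $P_{r,s}$. The cancellations should mirror those in the even-weight GKZ argument, with the operator $\partial/\partial X$ in the hypothesis of Theorem \ref{thm:2} accounting for the drop from weight $k$ to weight $k-1$.

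The principal obstacle is bookkeeping across this weight shift and across parity classes. The period polynomial identities live naturally in weight $k$, while the target relation lives in weight $k-1$, so reindexing the sum so that the former translates cleanly into the latter is a delicate combinatorial step requiring repeated use of binomial shifts of the form $r\binom{k-2}{r}=(k-2)\binom{k-3}{r-1}$. A secondary difficulty is the boundary condition $4\leq r\leq k-4$: the omitted terms at $r\in\{2,k-3\}$ produce contributions that must either vanish by the $W^-_k$-parity of $p$ or be absorbable into the $Z_{k-1}$ modulus, and this requires carefully tracking each boundary term through both shuffle relations. Finally, I must verify that wrong-parity contributions (those arising with $r$ odd) cancel identically, so that the sum collapses to the even-$r$ statement asserted in (\ref{eq:rel2}) and not a larger combination.
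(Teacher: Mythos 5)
Your opening reduction is fine: from $r\binom{k-2}{r}=(k-2)\binom{k-3}{r-1}$ one does get $c_{r,s}=(k-2)q_{r+1,s}$, where the $q_{r,s}$ are the GKZ-normalized coefficients of $p(X+Y,Y)$ in weight $k$. But everything after that is a hope, not an argument, and the central idea of the proof is missing. The paper does not try to transport a weight-$k$ coefficient identity into weight-$(k-1)$ double-shuffle manipulations. Instead it builds an explicit auxiliary polynomial of degree $k-3$, namely $f=q-q|\varepsilon$ with $q=\frac{\partial}{\partial X}(p|T)$, i.e.\ $f(X,Y)=p'(X+Y,X)-p'(X+Y,Y)$, proves the two structural facts $f=f|ST'$ and $(f|S)|\varepsilon=f|S$ from the (differentiated) period relations of $p$, and then observes that $f-f|S=(f|S)|(T'-1)=H(X,X+Y)-H(X,Y)$ with $H=f|S$ symmetric, so that Proposition \propo{2} immediately yields the relation \eqn{rel2} after reading off coefficients. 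Your plan to ``substitute identities into the displayed sum and apply the two shuffle relations'' is exactly the content that Proposition \propo{2} packages; without producing the symmetric polynomial $H$ you have no mechanism for actually carrying out the elimination of the $P_{r,s}$, and no reason the ``cancellations should mirror'' the even-weight case.

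Two further points where the proposal would run into trouble if pushed. First, the derivative of a period polynomial is not a period polynomial, so the ``functional equations for $p(X+Y,Y)$'' you intend to match coefficients against do not directly give identities among the $c_{r,s}$; one must differentiate the three-term relation $p(X,X+Y)+p(X+Y,Y)+p(Y,X)=0$ with respect to the appropriate variable and use the chain rule carefully (this is exactly what the paper does in the proof of its Lemma 1, obtaining $p'(Y,X+Y)-p'(X+Y,Y)+p'(X+Y,X)=p'(Y,X)$). That step is absent from your outline. Second, the boundary behaviour you worry about (the range $4\le r\le k-4$) is not handled by parity of $p$ alone in your framework; in the paper's proof it falls out because $q$ has no terms in $X^0Y^{k-3}$ or $X^{k-3}Y^0$ of the offending kind and because $f-f|S$ is automatically $\varepsilon$-antisymmetric, killing the even-power-of-$X$ terms. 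As written, the proposal identifies a correct change of variables but does not contain a proof or a viable route to one.
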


Using Theorems \ref{thm:1} and \ref{thm:2} we also obtain the following lower bounds for relations among double zeta values in odd weight, akin to the ``rough statement'' of Theorem 3 in \cite{GKZ}.

\begin{theorem}\label{thm:3}
Let $k\geq 7$ be an odd integer. Up to rational multiples of $\zeta(k)$, the values $\{\zeta(r,s)|\ r\textrm{ even, }4\leq r\leq k-3,\ r+s=k\}$ satisfy at least $\dim S_{k-1}+\dim S_{k+1}$ linearly independent rational linear relations, where $S_i$ denotes the space of cusp forms of weight $i$ on $\SL_2(\Z)$.
\end{theorem}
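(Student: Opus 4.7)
The plan is to apply Theorems \thm{1} and \thm{2} at the complementary even weights $k = K - 1$ and $k = K + 1$, combine the relations they produce, and pass to the zeta realization $Z_{r,s} \mapsto \zeta(r,s)$. Fix an odd $K \geq 7$. For $K \in \{7, 9\}$ the assertion is vacuous since $\dim S_{K \pm 1} = 0$; for $K = 11$ only Theorem \thm{2} (with $k = 12$) is needed; for $K \geq 13$ both theorems apply directly. For each $p \in W^+_{K-1}$, Theorem \thm{1} yields
\begin{equation*}
R_I(p) := \sum_{\substack{r + s = K,\ r \text{ even} \\ 4 \leq r \leq K-3}} (b_{r,s} - b_{s,r})\, Z_{r,s} \equiv 0 \pmod{Z_K},
\end{equation*}
and for each $q \in W^-_{K+1}$, Theorem \thm{2} yields an analogous $R_{II}(q)$ with coefficients $c_{r,s} - c_{s,r}$. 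The realization turns these formal identities into $\Q$-linear relations among $\{\zeta(r, K-r) : r \text{ even},\ 4 \leq r \leq K - 3\}$ modulo $\zeta(K)$.

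It remains to count independent relations. Since $\dim W^+_{K-1} \geq \dim S_{K-1}$ and $\dim W^-_{K+1} \geq \dim S_{K+1}$ by Eichler--Shimura, it suffices to prove that the combined linear map
\begin{equation*}
\Phi : W^+_{K-1} \oplus W^-_{K+1} \longrightarrow \Q^{(K-5)/2}, \qquad (p, q) \mapsto \bigl(\text{coefficient vector of } R_I(p) + R_{II}(q)\bigr)
\end{equation*}
is injective. This splits into (a) separate injectivity of $\Phi_I := R_I$ and $\Phi_{II} := R_{II}$, and (b) triviality of the intersection $\Phi_I(W^+_{K-1}) \cap \Phi_{II}(W^-_{K+1})$.

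For (a), I would expand $p(X + Y, Y)$ (respectively $\partial_X p(X + Y, Y)$) into the full collection of monomial coefficients indexed by $r \in \{1, 2, \ldots, K - 2\}$, and apply the two- and three-term functional equations characterising $W^\pm$ to express the boundary coefficients (at $r = 1, 2, 3$ and near $r = K$) in terms of the interior ones. Vanishing of the antisymmetrised interior data $b_{r,s} - b_{s,r}$ (respectively $c_{r,s} - c_{s,r}$) for $r \in \{4, 6, \ldots, K - 3\}$ then propagates to force $p = 0$ (respectively $q = 0$).

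The principal obstacle is part (b): both $\Phi_I$ and $\Phi_{II}$ land in the same $(K-5)/2$-dimensional coefficient space, with no obvious separation. A viable plan is to build a Hecke-theoretic test functional exploiting the fact that $S_{K-1}$ and $S_{K+1}$ are inequivalent as Hecke modules, or to compute $\Phi$ explicitly in a chosen basis and verify by direct inspection that its matrix has full column rank. Once (a) and (b) are settled, $\dim \operatorname{Image}(\Phi) = \dim S_{K-1} + \dim S_{K+1}$, giving the asserted lower bound on relations.
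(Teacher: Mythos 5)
There is a genuine gap, and it sits exactly where you flag it: your step (b), the triviality of $\Phi_I(W^+_{K-1})\cap\Phi_{II}(W^-_{K+1})$, is the entire content of the theorem beyond Theorems \ref{thm:1} and \ref{thm:2}, and neither of your proposed remedies closes it. ``Compute $\Phi$ in a basis and check full column rank by inspection'' is a finite verification and cannot cover all odd $k$; the Hecke-theoretic functional is not constructed, and it is not clear what it would even act on, since the coefficient vectors live in $\Q^{(k-5)/2}$ with no given Hecke module structure. The paper's resolution is quite different and is the key idea you are missing: it rewrites $L_1$ and $L_2$ as $L_1'$ and $L_2'$ (Lemma 1 of Section \ref{proof3}), from which the coefficient vector of $R_I(p)$ is $D_1^{(k-1)}B_1^{(k-1)}(\alpha_{r,s})^T$ and that of $R_{II}(q)$ is $D_2^{(k+1)}B_2^{(k+1)}(\beta_{r,s})^T$, where $(\alpha_{r,s})$, $(\beta_{r,s})$ are the odd-index coefficients of $p$ and $q'$. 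The decisive observation is that $D_1^{(k-1)}=D_2^{(k+1)}$ and $B_1^{(k-1)}=B_2^{(k+1)}$, i.e.\ \emph{one and the same} invertible (diagonal times upper-triangular) matrix is applied to both families; hence linear independence of all the coefficient vectors together is equivalent to linear independence of the $(\alpha)^T$'s and $(\beta)^T$'s together, which is exactly the injectivity of Zagier's map $W^+_{k-1}\oplus W^-_{k+1}\to\ker(\cB_K^{(1)})$ from \cite{Zag1}. This single citation disposes of both your (a) and your (b) simultaneously; without it, or some substitute of comparable strength, your argument does not go through.

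A second, smaller but concrete error: your step (a) for Type II is false as stated. The Eisenstein period polynomial $X^{k-1}-Y^{k-1}\in W^-_{k+1}$ has $\partial_X$ equal to a monomial whose odd-index coefficients in the relevant range all vanish, so it produces the trivial relation (the paper notes this explicitly in the remark after Example \ref{exam:wt15}); thus vanishing of the antisymmetrised interior data does \emph{not} force $q=0$, and $\Phi_{II}$ has a one-dimensional kernel on $W^-_{k+1}$. The count still works because $\dim W^-_{k+1}=\dim S_{k+1}+1$, but your claimed propagation argument would, if correct, prove too much, which is a sign it cannot be carried out as described.
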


Notice that the first entry of the double zeta value in this result is an even integer between $4$ and $k-3$, while in Zagier's result it is an even integer between $2$ and $k-1$. It is worth pointing out that Theorems \ref{thm:1} and \ref{thm:2} are compatible with the decomposition (\ref{eq:bk}). The detail of such compatibility will be explained in Section \ref{Zagier}.


\begin{example}\label{exam:relation}
For $k=11$, $k=13$ and $k=15$, the only cases when $\dim S_{k-1}+\dim S_{k+1}=1$, we have\begin{align*}
-3\zeta(11)&=28\zeta(8,3)+20\zeta(6,5)-42\zeta(4,7);\\
-3\zeta(13)&=24\zeta(10,3)+28\zeta(8,5)-10\zeta(6,7)-36\zeta(4,9);\\
-3\zeta(15)&=22\zeta(12,3)+30\zeta(10,5)+7\zeta(8,7)-20\zeta(6,9)-33\zeta(4,11).
\end{align*}

For $k=17$, the first case when $\dim S_{k-1}=\dim S_{k+1}=1$, we have 
\begin{align*}
-23\zeta(17)=156&\zeta(14,3)+242\zeta(12,5)\\
+&153\zeta(10,7)-56\zeta(8,9)-215\zeta(6,11)-234\zeta(4,13);\\
-597\zeta(17)=4004&\zeta(14,3)+6358\zeta(12,5)\\
+&4347\zeta(10,7)-1624\zeta(8,9)-5885\zeta(6,11)-6006\zeta(4,13),
\end{align*}
where the first identity comes from $S_{16}$, and the second one from $S_{18}$.
\end{example}

Gangl, Kaneko and Zagier also proved the following statement in their paper \cite{GKZ}.
\begin{thmNoNum}[Theorem 1 in \cite{GKZ}] 
For even $k > 2$, one has
\begin{align*}
\sum^{k-1}_{\substack{r=2\\r\textrm{ even}}}Z_{r,k-r}=\frac{3}{4}Z_k,\qquad \sum^{k-1}_{\substack{r=2\\r\textrm{ odd}}}Z_{r,k-r}=\frac{1}{4}Z_k.
\end{align*}
\end{thmNoNum}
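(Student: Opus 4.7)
Let $S:=\sum_{r=2}^{k-1}Z_{r,k-r}$ and $A:=\sum_{r=2}^{k-1}(-1)^rZ_{r,k-r}$, so that $S_{\ev}+S_{\od}=S$ and $S_{\ev}-S_{\od}=A$. The two claims are jointly equivalent to the sum formula $S=Z_k$ together with the parity-splitting identity $A=\tfrac{1}{2}Z_k$. My plan is to derive the parity identity cleanly from \eqn{rel(1)} and \eqn{rel(2)}, and then to combine it with the classical Euler sum formula $S=Z_k$ to split $S$ by parity.

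For the parity identity, I first sum \eqn{rel(1)} weighted by $(-1)^r$ over $r\in\{2,\ldots,k-2\}$. Since $k$ is even one has $(-1)^r=(-1)^{k-r}$, which pairs the two $Z$-terms and folds the left-hand side to $2\sum_{r=2}^{k-2}(-1)^rZ_{r,k-r}$; together with the elementary $\sum_{r=2}^{k-2}(-1)^r=1$, this yields
\begin{equation*}
2\sum_{r=2}^{k-2}(-1)^rZ_{r,k-r}=\sum_{r=2}^{k-2}(-1)^rP_{r,k-r}-Z_k. \qquad (\ast)
\end{equation*}
Separately, I sum \eqn{rel(2)} weighted by $(-1)^j$ over $j\in\{2,\ldots,k-2\}$. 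The coefficient of $Z_{r,k-r}$ on the left-hand side is $\sum_{j=2}^{k-2}(-1)^j\bigl[\tbinom{r-1}{j-1}+\tbinom{r-1}{k-j-1}\bigr]$, and the substitution $j\mapsto k-j$ (with $(-1)^j=(-1)^{k-j}$) shows the two binomial contributions are equal; applying the standard identity $\sum_{i=0}^{r-1}(-1)^i\tbinom{r-1}{i}=0$ for $r\ge 2$ then shows this coefficient equals $2$ for $r\in[2,k-2]$ and $4$ for $r=k-1$. This produces
\begin{equation*}
\sum_{j=2}^{k-2}(-1)^jP_{j,k-j}=2(S+Z_{k-1,1}). \qquad (\ast\ast)
\end{equation*}

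Substituting $(\ast\ast)$ into $(\ast)$ cancels the $P$-contributions, and rewriting the left-hand side of $(\ast)$ as $A+Z_{k-1,1}$ (via $(-1)^{k-1}=-1$) cancels the $Z_{k-1,1}$ terms as well. What remains is the clean formal identity $A=S-\tfrac{1}{2}Z_k$, which by itself already gives the unconditional formal statement $S_{\od}=\tfrac{1}{2}(S-A)=\tfrac{1}{4}Z_k$. The main obstacle is the remaining sum formula $S=Z_k$: a small-weight check (e.g.\ for $k=4$, where \eqn{rel(1)} and \eqn{rel(2)} only determine $S=\tfrac{1}{2}P_{2,2}-\tfrac{1}{4}Z_4$) shows that $S=Z_k$ is not a formal consequence of \eqn{rel(1)}--\eqn{rel(2)} alone but rather encodes Euler's classical sum formula $\sum_{r=2}^{k-1}\zeta(r,k-r)=\zeta(k)$ in the double zeta value realization (equivalently, it follows after augmenting the formal framework with a regularized boundary case of the stuffle relation at $j=1$). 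Granting $S=Z_k$, the parity identity $A=S-\tfrac{1}{2}Z_k$ immediately yields $S_{\ev}=S-\tfrac{1}{4}Z_k=\tfrac{3}{4}Z_k$, completing the proof.
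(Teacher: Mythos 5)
The paper does not reprove this statement---it is quoted from \cite{GKZ}---so I can only judge your argument on its own terms, and on its own terms it is essentially correct and rather elegant: the alternating-sum combination of the two defining relations is an elementary route to the antisymmetrized identity $A=S-\tfrac12 Z_k$. I checked the details: the folding of the first relation under $r\mapsto k-r$ for even $k$, the value $\sum_{r=2}^{k-2}(-1)^r=1$, and the coefficient computation giving $2$ for $2\le r\le k-2$ and $4$ for $r=k-1$ are all right, and they do combine (after the $Z_{k-1,1}$ cancellation you describe) to $A=S-\tfrac12 Z_k$, hence to the unconditional conclusion $S_{\od}=\tfrac14 Z_k$.

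The one genuine gap is your treatment of the sum formula $S=Z_k$. You ``grant'' it and suggest it is not available formally, requiring either Euler's analytic sum formula or an ad hoc regularization. That is not the case for the space in which the statement actually lives: the formal double zeta space $\cD_k$ of Section \ref{background} imposes \eqn{dzv1} and \eqn{dzv2} for \emph{all} indices $\ge 1$, boundary cases included. Taking \eqn{dzv2} at $(i,j)=(1,k-1)$ gives $\sum_{r=1}^{k-1}Z_{r,k-r}+Z_{k-1,1}=P_{1,k-1}$, since $\binom{r-1}{0}=1$ and $\binom{r-1}{k-2}=\delta_{r,k-1}$; subtracting \eqn{dzv1} at $(r,s)=(1,k-1)$, namely $Z_{1,k-1}+Z_{k-1,1}=P_{1,k-1}-Z_k$, cancels $P_{1,k-1}$, $Z_{1,k-1}$ and $Z_{k-1,1}$ and leaves exactly $S=\sum_{r=2}^{k-1}Z_{r,k-r}=Z_k$. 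Your $k=4$ computation correctly shows that the \emph{interior} relations alone do not force $S=Z_k$, but no augmentation of the framework is needed---the boundary relations are part of the definition of $\cD_k$ (and of the statement being proved, whose conclusion is an identity in $\cD_k$). Insert that two-line derivation and your proof is complete and fully formal.
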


The double zeta value realization of the above statement tells us that for even $k>2$, we always have
\begin{align*}
\sum^{k-1}_{\substack{r=2\\r\textrm{ even}}}\zeta(r,k-r)=\frac{3}{4}\zeta(k),\qquad \sum^{k-1}_{\substack{r=2\\r\textrm{ odd}}}\zeta(r,k-r)=\frac{1}{4}\zeta(k).
\end{align*}

It is easy to see that the above statement does not hold for double zeta values of odd weight. But asymptotically, it is still correct, i.e., we have the following statement.

\begin{theorem}\label{thm:5}
For any integer $d\geq 2$, and any $i$ satisfying $0\leq i\leq d-1$, we have
\begin{align*}
\lim_{k\to\infty} \zeta(k)^{-1} {\displaystyle\sum^{k-1}_{\substack{r=2\\r\equiv i\bmod d}}}\zeta(r,k-r)=C_{d}^{(i)}:=\sum^{\infty}_{\substack{j=2\\j\equiv i\bmod d}}(\zeta(j)-1).
\end{align*}
\end{theorem}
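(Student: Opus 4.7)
The plan is to decompose each double zeta value into a ``main term'' (the contribution from the inner variable $n=1$) and a ``tail'' (the contribution from $n\geq 2$), show that the summed tail is negligible once divided by $\zeta(k)$, and identify the sum of the main terms as $C_d^{(i)}$ in the limit. Writing
\[
\zeta(r,k-r) \;=\; \sum_{m>1}\frac{1}{m^r} \;+\; \sum_{m>n\geq 2}\frac{1}{m^r n^{k-r}} \;=\; (\zeta(r)-1)+E_{r,k},
\]
the sum in question becomes $\sum_{r\in I_k}(\zeta(r)-1)+\sum_{r\in I_k}E_{r,k}$, where $I_k=\{r:2\leq r\leq k-1,\ r\equiv i\bmod d\}$.

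For the main term, the series $\sum_{r\geq 2}(\zeta(r)-1)$ is absolutely convergent (indeed equal to $1$, by the standard double-summation $\sum_{r\geq 2}\sum_{n\geq 2}n^{-r}=\sum_{n\geq 2}\frac{1}{n(n-1)}=1$), so monotone convergence applied to the increasing family $I_k\uparrow\{r\geq 2:r\equiv i\bmod d\}$ immediately gives
\[
\sum_{r\in I_k}(\zeta(r)-1)\;\longrightarrow\;\sum_{\substack{r\geq 2\\ r\equiv i\bmod d}}(\zeta(r)-1)\;=\;C_d^{(i)}.
\]

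For the tail $E_{r,k}$, I plan to apply the integral estimate $\sum_{m>n}m^{-r}\leq\int_n^\infty x^{-r}\,dx=n^{1-r}/(r-1)$, valid for $r\geq 2$, to obtain the uniform bound
\[
E_{r,k}\;\leq\;\frac{1}{r-1}\sum_{n\geq 2}\frac{1}{n^{k-1}}\;=\;\frac{\zeta(k-1)-1}{r-1}.
\]
Summing over $r\in I_k$ and using $\sum_{r\in I_k}1/(r-1)=O(\log k)$ together with $\zeta(k-1)-1=O(2^{-k})$ yields $\sum_{r\in I_k}E_{r,k}=O(2^{-k}\log k)\to 0$. Since also $\zeta(k)^{-1}\to 1$, the theorem follows.

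The delicate point is precisely this uniform bound on $E_{r,k}$. A naive splitting of the form $E_{r,k}\leq \zeta(r)(\zeta(k-r)-1)$ degenerates near the top of the range, where $r$ is close to $k-1$ and $\zeta(k-r)-1$ is not small at all. Pulling a factor of $n^{-(r-1)}$ out of the inner sum \emph{before} bounding is the key move: it produces a summand proportional to $n^{-(k-1)}$, whose exponent is independent of $r$, so that the resulting $r$-dependence collapses into the harmless harmonic-type factor $1/(r-1)$ and the exponential decay $\zeta(k-1)-1$ can be factored out. Once this is in place, nothing further needs to be said, and the generality in $d$ and $i$ costs nothing.
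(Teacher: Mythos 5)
Your proof is correct, and its skeleton is the same as the paper's: replace $\zeta(k)^{-1}$ by $1$, split off the $n=1$ contribution $\zeta(r)-1$ from each $\zeta(r,k-r)$, identify the sum of these with $C_d^{(i)}$ in the limit, and show the summed remainder tends to $0$. Where you diverge is in the tail estimate. The paper bounds the remainder by the product $(\zeta(r)-1)(\zeta(k-r)-1)$ (note: $\zeta(r)-1$, not $\zeta(r)$, so the bound does \emph{not} degenerate at the top of the range --- when $r$ is near $k-1$ the first factor is exponentially small) and then observes that in each term at least one factor is at most $\zeta(k/2)-1$, so the whole sum is $O\bigl(k(\zeta(k/2)-1)\bigr)\to 0$. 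You instead pull $n^{1-r}$ out of the inner sum via the integral comparison $\sum_{m>n}m^{-r}\le n^{1-r}/(r-1)$, getting the uniform bound $E_{r,k}\le(\zeta(k-1)-1)/(r-1)$ and hence $O(2^{-k}\log k)$ for the total. Both arguments are valid; yours gives a sharper rate and avoids the case analysis on which of $r$, $k-r$ is large, while the paper's requires only the crude termwise inequality and no integral comparison. Your stated motivation for the integral trick --- that a product-type splitting ``degenerates near the top of the range'' --- applies to the bound $\zeta(r)(\zeta(k-r)-1)$ but not to the symmetric bound $(\zeta(r)-1)(\zeta(k-r)-1)$ actually used in the paper, so the trick is a convenience rather than a necessity. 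Everything else (monotone convergence for the main term, $\zeta(k)^{-1}\to1$) is fine.
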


\begin{remark}
Notice that we do not require our $k$ to be odd in this case. It is worth pointing out that our result above is compatible with Theorem $1.1$ in \cite{Mac} by Machide, which gives some restricted sum formulas for double zeta values. For example, when $k\equiv 0\bmod 3$, Machide proves that
\begin{align*}
\bigg(\sum_{r\equiv3(6)}-\sum_{r\equiv4(6)}-\sum_{r\equiv5(6)}\bigg)\zeta(r,k-r)=\frac{1}{3}\sum_{r\equiv1(2)}\zeta(r,k-3),
\end{align*}
while Theorem \ref{thm:5} gives that
\begin{align*}
\lim_{k\to\infty}\zeta(k)^{-1}\bigg(\sum_{r\equiv3(6)}-\sum_{r\equiv4(6)}-\sum_{r\equiv5(6)}\bigg)\zeta(r,k-r)
=\frac{1}{3}\lim_{k\to\infty}\zeta(k)^{-1}\sum_{r\equiv1(2)}\zeta(r,k-3).
\end{align*}
In this case, Machide's result works for all $k\equiv 0\bmod 3$, while Theorem \ref{thm:5} gives an asymptotical statement.
\end{remark}

\begin{example}
In particular, when $d=2$ and $i=0,1$, we have (cf., e.g., \cite{BBC})
\begin{align*}
C_{2}^{(0)}&=\sum_{n=1}^\infty(\zeta(2n)-1)=\frac{3}{4},\qquad C_{2}^{(1)}=\sum_{n=1}^\infty(\zeta(2n+1)-1)=\frac{1}{4}.
\end{align*}

Therefore, 
\begin{align*}
\zeta(k)^{-1}{\displaystyle\sum^{k-1}_{\substack{r=2\\r\textrm{ even}}}}\zeta(r,k-r)\to \frac{3}{4},\qquad \zeta(k)^{-1}{\displaystyle\sum^{k-1}_{\substack{r=2\\r\textrm{ odd}}}}\zeta(r,k-r)\to \frac{1}{4},\qquad\textrm{ as }k\to\infty.
\end{align*}
\end{example}

In Section $2$ we provide some background on the formal double zeta space and the $\PGL_2(\Z)$-action on the space of homogeneous polynomials. In Section $3$ we provide the proof of Theorem \thm{1}. Theorem \thm{2} can be proved using almost the same method, so we will only provide the construction and skip the detailed proof. Some double zeta value examples for Theorem \thm{1} and Theorem \thm{2} will be provided in Section $4$. In Section $5$, we will explain how to use our theorems to obtain information about the left kernel of Zagier's matrix $\cB_K$ (see Section 5). In Section $6$, we will show that all the rational linear relations obtained from Theorem \thm{1} and Theorem \thm{2} are linearly independent. Finally, in Section $7$, we will prove Theorem \thm{5} and provide some more examples of restricted sums of double zeta values.


\section{Background}\label{background}

We begin by reviewing the definition of the formal double zeta space (cf., \cite{Gon1}, \cite{IKZ}, \cite{Rac}). Let $k>2$ be an integer. We introduce formal variables $Z_{r,s}$, $P_{r,s}$ and $Z_k$ and impose the relations
\begin{align}
Z_{r,s}+Z_{s,r}=P_{r,s}-Z_k\quad(r+s=k), \label{eq:dzv1}\\
\sum_{r+s=k}\bigg[{r-1\choose i-1}+{r-1\choose j-1}\bigg]Z_{r,s}=P_{i,j}\quad(i+j=k). \label{eq:dzv2}
\end{align}
(From now on, whenever we write $r + s = k$ or $i + j = k$ without comment, it is assumed that the variables are integers $\geq 1$.) 

The formal double zeta space is defined as the $\Q$-vector space 
$$\cD_k=\frac{\{\textrm{$\Q$-linear combinations of formal symbols $Z_{r,s}$, $P_{r,s}$, $Z_k$}\}}{\langle \textrm{relations }\eqn{dzv1}\textrm{ and }\eqn{dzv2}\rangle}.$$

The double zeta realization we consider in this paper is the following realization $\cD_k\to\R$ of the formal double zeta space. 
\begin{align*}
Z_{r,s}&\mapsto 
\begin{cases}
\zeta(r,s), &\textrm{ if $r>1$,}\\
\kappa, &\textrm{ if $r=1$,}\\
\end{cases}\\
P_{r,s}&\mapsto
\begin{cases}
\zeta(r)\zeta(s), &\textrm{ if $r,s>1$,}\\
\kappa+\zeta(k-1,1)+\zeta(k), &\textrm{ if $r=1$ or $s=1$,}\\
\end{cases}\\
Z_k &\mapsto \zeta(k),
\end{align*}
where $\kappa\in\R$ can be chosen to be any real number.

One basic way of working with $\cD_k$ is by studying the relations among the $Z_{r,s}$. We first introduce some basic notation. For each even $k$, let $V_k=\langle X^{r-1}Y^{s-1}\ |\ r+s=k\rangle$ be the space of homogeneous polynomials of degree $k-2$ in two variables. Let $W_k\subset V_k$ be the subspace of polynomials satisfying the relations
\begin{align*}
P(X,Y)+P(-Y,X)&=0\\
P(X,Y)+P(X-Y,X)+P(Y,Y-X)&=0.
\end{align*}

We call $P\in W_k$ a period polynomial. This period polynomial space splits as the direct sum of subspaces $W_k^+$ and $W_k^-$ of polynomials which are symmetric and antisymmetric with respect to $X\leftrightarrow Y$. We call them odd and even period polynomials. The Eichler-Shimura-Manin theory tells us that there are canonical isomorphisms over $\C$ between $S_k$(the space of cusp forms of weight $k$) and $W_k^+$ and between $M_k$(the space of modular forms of weight $k$) and $W_k^-$.

In \cite{GKZ}, Gangl, Kaneko and Zagier proved the following statement, which is important in understanding the connection between relations of $Z_{r,s}$ up to $Z_k$ and the period polynomials.

\begin{proposition}[Proposition 2 in \cite{GKZ}] \label{propo:2}
Let $a_{r,s}$ and $\lambda$ be rational numbers. Then the following two statements are equivalent:
\begin{enumerate}
\item The relation
\begin{align}
\sum_{r+s=k}a_{r,s}Z_{r,s}=\lambda Z_k
\end{align}
holds in $\cD_k$.
\item The generating function
\begin{align}
A(X,Y)=\sum_{r+s=k}{k-2 \choose r-1}a_{r,s} X^{r-1} Y^{s-1}\in V_k
\end{align}
can be written as $H(X,X+Y)-H(X,Y)$ for some symmetric homogeneous polynomial $H\in \Q[X,Y]$ of degree $k-2$, and 
\begin{align}\label{eq:constant}
\lambda=\frac{k-1}{2}\int_0^1 H(t,1-t)dt.
\end{align}
\end{enumerate}
\end{proposition}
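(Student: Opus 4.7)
The plan is to translate the defining relations of $\cD_k$ into a single identity between generating polynomials, giving a bijective correspondence with symmetric polynomials on the $A$-side. First, I would eliminate the symbols $P_{i,j}$ from the presentation of $\cD_k$ by substituting $P_{i,j}=Z_{i,j}+Z_{j,i}+Z_k$ (from \eqn{dzv1}) into \eqn{dzv2}, turning the shuffle relations into the reduced ones
$$R_{i,j}\!:\ \sum_{r+s=k}\!\left[\binom{r-1}{i-1}+\binom{r-1}{j-1}\right]\!Z_{r,s}-Z_{i,j}-Z_{j,i}-Z_k=0 \qquad (i+j=k),$$
which, together with \eqn{dzv1} used to express the $P$'s, generate all relations of $\cD_k$ after eliminating $P$. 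Hence (1) is equivalent to the existence of symmetric rationals $c_{i,j}=c_{j,i}$ ($i+j=k$) with $\sum a_{r,s}Z_{r,s}-\lambda Z_k=\sum_{i+j=k}c_{i,j}R_{i,j}$; matching coefficients of $Z_{r,s}$ and $Z_k$ yields
$$a_{r,s}=2\!\!\sum_{i+j=k}\!c_{i,j}\binom{r-1}{i-1}-2c_{r,s},\qquad \lambda=\sum_{i+j=k}c_{i,j}.$$

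Next I would package the $c_{i,j}$ into a symmetric homogeneous polynomial of degree $k-2$ by defining
$$H(X,Y)=2\sum_{i+j=k}c_{i,j}\binom{k-2}{i-1}X^{i-1}Y^{j-1},$$
which is symmetric in $X\leftrightarrow Y$ because $c_{i,j}=c_{j,i}$ and $\binom{k-2}{i-1}=\binom{k-2}{j-1}$. The central calculation is to expand $H(X,X+Y)-H(X,Y)$ via the binomial theorem and read off the coefficient of $X^{r-1}Y^{s-1}$; applying the elementary identity
$$\binom{k-2}{i-1}\binom{j-1}{s-1}=\binom{k-2}{r-1}\binom{r-1}{i-1}\qquad (i+j=r+s=k),$$
(both sides equal $(k-2)!/[(s-1)!(i-1)!(r-i)!]$) collapses that coefficient to exactly $\binom{k-2}{r-1}a_{r,s}$. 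Therefore $A(X,Y)=H(X,X+Y)-H(X,Y)$, and since $H\leftrightarrow(c_{i,j})$ is a bijection between symmetric polynomials of degree $k-2$ and symmetric families, the equivalence (1)$\Leftrightarrow$(2) (for the polynomial part) follows at once.

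To recover the integral formula \eqn{constant}, I would integrate $H(t,1-t)$ term by term using the Beta-function identity $\int_0^1 t^{i-1}(1-t)^{j-1}\,dt=(i-1)!(j-1)!/(k-1)!$ together with $\binom{k-2}{i-1}^{-1}=(i-1)!(j-1)!/(k-2)!$. The factorials telescope and give $\int_0^1 H(t,1-t)\,dt=\tfrac{2}{k-1}\sum_{i+j=k}c_{i,j}=\tfrac{2\lambda}{k-1}$, i.e.\ $\lambda=\tfrac{k-1}{2}\int_0^1 H(t,1-t)\,dt$, matching \eqn{constant}.

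The main obstacle is really just the combinatorial bookkeeping: one must track the factors of $2$ arising from symmetric-vs-ordered-pair indexing (and from the fact that $R_{i,j}=R_{j,i}$), and verify the pivotal binomial identity above, which is exactly the algebraic reason the operator $H(X,Y)\mapsto H(X,X+Y)-H(X,Y)$ dovetails with the shuffle coefficients $\binom{r-1}{i-1}+\binom{r-1}{j-1}$.
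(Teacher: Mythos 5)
Your argument is correct and complete. Note that the paper itself offers no proof of this proposition --- it is imported verbatim as Proposition 2 of \cite{GKZ} --- so there is nothing internal to compare against; your write-up is essentially the standard GKZ argument. The key steps all check out: eliminating the $P_{i,j}$ via $P_{i,j}=Z_{i,j}+Z_{j,i}+Z_k$ does reduce the presentation to the relations $R_{i,j}$ (a linear combination of the defining relations with no $P$-component forces the dzv1- and dzv2-coefficients to be opposite, leaving exactly a combination of the $R_{i,j}$, and $R_{i,j}=R_{j,i}$ lets you symmetrize the $c_{i,j}$); the binomial identity $\binom{k-2}{i-1}\binom{j-1}{s-1}=\binom{k-2}{r-1}\binom{r-1}{i-1}$ holds since both sides equal $(k-2)!/[(s-1)!(i-1)!(r-i)!]$; and the Beta integral $\int_0^1 t^{i-1}(1-t)^{j-1}\,dt=(i-1)!(j-1)!/(k-1)!$ gives $\lambda=\sum_{i+j=k}c_{i,j}=\tfrac{k-1}{2}\int_0^1H(t,1-t)\,dt$ exactly as claimed. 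One small point you could make explicit (though it is not needed for the equivalence as stated): the symmetric $H$ with $A=H(X,X+Y)-H(X,Y)$ is in fact unique, since a symmetric homogeneous polynomial fixed by $(X,Y)\mapsto(X,X+Y)$ must be a multiple of $X^{k-2}$ and hence zero for $k>2$, so the value of $\lambda$ produced in statement (2) is unambiguous.
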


The last thing we want to review is the $\PGL_2(\Z)$-action on $V_k$ and an alternative definition of $W_k$ using this action. Let $F\in V_k$ be a homogeneous polynomial of degree $k-2$ in $X$ and $Y$, and $\gamma=(\begin{smallmatrix}a&b\\c&d\end{smallmatrix})\in \PGL_2(\Z)$. The $\PGL_2(\Z)$-action on $V_k$ is defined to be
$$(F|\gamma)(X,Y)=F(aX+bY,cX+dY).$$

There are $5$ important elements in $\PGL_2(\Z)$ which will be used later.
\begin{align*}
\varepsilon=
\begin{pmatrix}
0 & 1\\
1 & 0
\end{pmatrix},\ 
S=
\begin{pmatrix}
0 & -1\\
1 & 0
\end{pmatrix},\ 
U=\begin{pmatrix}
1 & -1\\
1 & 0
\end{pmatrix},\ 
T=US=
\begin{pmatrix}
1 & 1\\
0 & 1
\end{pmatrix},\ 
T'=U^2S=
\begin{pmatrix}
1 & 0\\
1 & 1
\end{pmatrix}.
\end{align*}

By using the above matrices, the space $W_k$ can also be defined as 
\begin{align}
W_k=\ker(1+S)\cap\ker(1+U+U^2)\subset V_k.
\end{align}

\section{Proof of Theorem 1}\label{proof1}
Having reviewed formal double zeta space, period polynomials and the $\PGL_2(\Z)$-action, we now turn our attention to proving our main theorem. We will only give the detailed proof for Theorem \thm{1} here. Theorem \thm{2} can be treated by the same method, so we only provide the corresponding construction in a remark.

\begin{proof}[Proof of Theorem 1]
Let $q=p| T$. Since $p$ is an odd period polynomial, it must be symmetric. We have $p(X+Y,X)=p(X,X+Y)$. Let $f=q\cdot Y-q|\varepsilon\cdot X$. First we want to show that $f=f|ST'$. By a direct computation, we have
\begin{align*}
f|ST'-f &= (q\cdot Y-q|\varepsilon\cdot X)|ST'-(q\cdot Y-q|\varepsilon\cdot X)\\
&= q|ST'\cdot X-q|\varepsilon ST'\cdot(-(X+Y))-(q\cdot Y-q|\varepsilon\cdot X)\\
&= (q|\varepsilon+q|ST'+q|\varepsilon ST')\cdot X+ (q|\varepsilon ST'-q)\cdot Y.
\end{align*}

I claim that the two terms in parentheses are both zero. 
\begin{align*}
q|\varepsilon+q|ST'+q|\varepsilon ST'&= p|T\varepsilon+p|TST'+p|T\varepsilon ST'\\
&= p(X+Y, X)+p(-Y,X)+p(-Y,-X-Y)\\
&= p(X, X+Y)-p(X,Y)+p(X+Y,Y)\\
&= 0;\\
q|\varepsilon ST'-q &= p|T\varepsilon ST'-p|T\\
&= p(-Y,-X-Y)-p(X+Y,Y)\\
&= p(X+Y,Y)-p(X+Y,Y)\\
&= 0.
\end{align*}

Hence we have shown that $f|ST'=f$. Now let us consider the function $f|S$. Since
\begin{align*}
(f|S)|\varepsilon-f|S &= (q\cdot Y-q|\varepsilon\cdot X)|S\varepsilon-(q\cdot Y-q|\varepsilon\cdot X)|S\\
&=(q|\varepsilon S\varepsilon-q|S)\cdot X+(q|S\varepsilon-q|\varepsilon S)\cdot Y\\
&= 0,
\end{align*}
we know that $f|S$ is a symmetric homogeneous polynomial of degree $k-1$. By applying Proposition \propo{2} to the following identity
$$f-f|S=f|ST'-f|S=(f|S)|(T'-1),$$
the coefficients of $f-f|S$ will give us a relation between $Z_{r,s}$ $(r+s=k+1)$ up to a scalar multiple of $Z_{k+1}$. The last thing we need to show is that the only nonzero terms of $X^{r-1}Y^{s-1}$ appearing in $f-f|S$ are $2{k-1\choose r-1}(b_{r,s}-b_{s,r})X^{r-1}Y^{s-1}$ for even $r$ satisfying $4\leq r\leq k-2$.

Since $f|S$ is symmetric, $f-f|S=f|SS-f|S$ only contains the terms with odd powers of $X$ between $3$ and $k-3$, 
and those coefficients will be double of the corresponding ones in $f$. (There are no terms of odd powers of $X$ of degree $1$ and $k-1$ since $q$ itself already does not have such terms.) According to the definition, the coefficient of $X^{r-1}Y^{s-1}$ in $f=q\cdot Y-q|\varepsilon\cdot X$ is 
$${k-1\choose r-1}b_{r,s}-{k-1\choose s-1}b_{s,r}={k-1\choose r-1}(b_{r,s}-b_{s,r}).$$

Therefore, after dividing by $2$, we have shown the exact relation claimed in Theorem \thm{1}.
\end{proof}

\begin{remark}
For the proof of Theorem \thm{2}, we need to take $q=\frac{\partial}{\partial X}p| T$ and $f=q-q|\varepsilon$. Again we have
$$f=f|ST'\Longrightarrow f-f|S=f|ST'-f|S=(f|S)|(T'-1).$$
\end{remark}

\begin{corollary}
For type I and type II, we have the following two formulas.
\begin{itemize}
\item \textbf{Type I:} For any $p\in W_k^+$, let
\begin{eqnarray*}
L_1:=\frac{p(X+Y,Y)Y-p(X+Y,X)X-p(-X+Y,Y)Y-p(Y-X,-X)X}{2},
\end{eqnarray*}
then
\begin{align*}
L_1=\frac{f-f|S}{2},
\end{align*}
where $f(X,Y)=p(X+Y,X)Y-p(X+Y,Y)X$.

\item \textbf{Type II:}  For any $p\in W_k^-$, let $p'(X,Y)=\frac{\partial}{\partial X}p(X,Y)$, and
\begin{eqnarray*}
L_2:=\frac{p'(X+Y,Y)-p'(X+Y,X)-p'(-X+Y,Y)+p'(Y-X,-X)}{2}
\end{eqnarray*}
then 
\begin{align*}
L_2=\frac{f-f|S}{2},
\end{align*}
where $f(X,Y)=p'(X+Y,X)-p'(X+Y,Y)$.
\end{itemize}
Therefore, the coefficients of $Z_{r,s}$ in the relations (\ref{eq:rel1}) ((\ref{eq:rel2}) respectively)  are the coefficients of $X^{r-1}Y^{s-1}$ in $L_1$ ($L_2$ respectively) up to the obvious renormalization by the binomial coefficients.

\end{corollary}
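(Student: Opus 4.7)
The plan is to show by direct expansion that $L_1$ (resp. $L_2$) is precisely $\tfrac{1}{2}(f - f|S)$ for the polynomial $f$ constructed in the proof of Theorem~\thm{1} (resp. in the remark after it). Granted that identity, the coefficient assertion at the end of the corollary is immediate from Proposition~\propo{2}: the proof of Theorem~\thm{1} already establishes $f - f|S = (f|S)|(T'-1)$ with $f|S$ symmetric, so Proposition~\propo{2} identifies the $X^{r-1}Y^{s-1}$-coefficient of $f - f|S = 2L_1$, after dividing by $2$ and by the binomial ${k-1 \choose r-1}$ (resp. ${k-3 \choose r-1}$), with the coefficient of $Z_{r,s}$ in relation \eqn{rel1} (resp. \eqn{rel2}).

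For Type I, I take $q = p|T$ and $f = q\cdot Y - (q|\varepsilon)\cdot X$, which unfolds to $f(X,Y) = p(X+Y,Y)Y - p(X+Y,X)X$; the first two summands of $2L_1$ read off $f$ verbatim. For the other two summands I expand $(f|S)(X,Y) = f(-Y,X) = p(X-Y,X)X + p(X-Y,-Y)Y$. The decisive step is then to use that $\deg p = k-2$ is even, so $p(-a,-b) = p(a,b)$; this rewrites $p(X-Y,X) = p(Y-X,-X)$ and $p(X-Y,-Y) = p(-X+Y,Y)$, so that $-f|S$ matches the last two summands of $2L_1$ on the nose.

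For Type II the computation is structurally parallel, with $p$ replaced by $p' = \tfrac{\partial}{\partial X}p$ and with $f = q - q|\varepsilon$ (where $q = p'|T$), as constructed in the remark following the proof of Theorem~\thm{1}. The one subtlety is that $\deg p' = k-3$ is now odd, so the parity identity becomes $p'(-a,-b) = -p'(a,b)$; this sign flip is exactly what produces the $+p'(Y-X,-X)$ term in the definition of $L_2$, contrasted with the uniform signs appearing in $L_1$. Matching the four terms of $f - f|S$ against those of $2L_2$ then completes the proof.

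The only genuine obstacle I foresee is sign bookkeeping: the $S$-action contributes one source of signs and the parity of $p$ or $p'$ another, and combining them correctly requires some care. Apart from that, the argument is a routine expansion carried out entirely within the framework of the $\PGL_2(\Z)$-action on $V_k$ reviewed in Section~\ref{background}; no additional input beyond what is already used in the proof of Theorem~\thm{1} is needed.
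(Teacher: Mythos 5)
Your proposal is correct and takes essentially the same route as the paper, whose proof of this corollary merely asserts that $L_1=\frac{f-f|S}{2}$ ``can be seen from the definition'': you supply exactly the missing expansion, and your use of the parity of $\deg p=k-2$ (even) versus $\deg p'=k-3$ (odd) is precisely the sign bookkeeping that makes the four terms match. One remark: the identity holds for $f$ as defined in the proof of Theorem \thm{1} and the remark following it, namely $f=p(X+Y,Y)Y-p(X+Y,X)X$ (resp.\ $f=p'(X+Y,Y)-p'(X+Y,X)$), which is what you actually use; the corollary's displayed $f$ has its two terms transposed (an apparent typo in the statement), and with that literal $f$ the claimed equality would fail.
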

\begin{remark}
Here ``the obvious renormalization by the binomial coefficients'' means that 
$$\begin{cases}
\textbf{Type I:} &\textrm{dividing the coefficient of $X^{r-1}Y^{s-1}$ in $L_1$ by ${k-1\choose r-1}$}\\
\textbf{Type II:} &\textrm{dividing the coefficient of $X^{r-1}Y^{s-1}$ in $L_2$ by ${k-3\choose r-1}$}
\end{cases},$$
\end{remark}

\begin{proof}[Proof of Corollary 1]
According to the proof of Theorem \thm{1}, we know that up to the obvious renormalization by the binomial coefficients, the linear relations can be computed by $\frac{f-f|S}{2}$, where $f(X,Y)=p(X+Y,X)Y-p(X+Y,Y)X$. It can be seen from the definition that
\begin{eqnarray*}
L_1=\frac{f-f|S}{2}.
\end{eqnarray*}
Similarly, for type II, the linear relations can be computed by $\frac{f-f|S}{2}$, where $f(X,Y)=p'(X+Y,X)-p'(X+Y,Y)$. Again we have 
\begin{eqnarray*}
L_2=\frac{f-f|S}{2}.
\end{eqnarray*}
\end{proof}



\section{Examples}\label{example}
Now we will provide some examples for Theorem \thm{1} and Theorem \thm{2}. By taking the double zeta value realization, we obtain the examples for double zeta values given in Example \exam{relation}
\begin{example}\label{exam:wt13}
The space $W^+_{12}$ is $1$-dimensional, spanned by the odd period polynomial $p(x)=4x^9-25x^7+42x^5-25x^3+4x$. We have $p(x+1)=4x^9+36x^8+119x^7+161x^6+21x^5-161x^4-144x^3-36x^2$, so the $b_{r,s}$ of the theorem are given (after multiplication by $330$) by the table
\begin{center}
\begin{tabular}{|c|c|c|c|c|c|c|c|c|}
\hline
$r$  & 10 & 9 & 8 & 7 & 6 & 5 & 4 & 3  \\ 
$s$ & 3 & 4 & 5 & 6 & 7 & 8 & 9 & 10 \\ \hline
$330b_{r,s}$ & 24 & 72 & 119 & 115 & 15 & -161 & -288 & -216 \\ \hline
\end{tabular}
\end{center}

The relations in the theorem, divided by $10$, become
$$24Z_{10,3}+28Z_{8,5}-10Z_{6,7}-36Z_{4,9}\equiv 0\quad \textrm{mod }Z_{13}.$$
In the double zeta value realization, this can be written as 
\begin{align}
24\zeta(10,3)+28\zeta(8,5)-10\zeta(6,7)-36\zeta(4,9)=-3\zeta(13).
\end{align}
The coefficient of $\zeta(13)$ can be obtained from \eqn{constant} directly.
\end{example}

\begin{example}\label{exam:wt11}
The space $W^-_{12}$ is $2$-dimensional, spanned by the even period polynomial $p(x)=x^{10}-1$ and $x^8-3x^6+3x^4-x^2$. For the latter, we have $p'(x+1)=8x^7+56x^6+150x^5+190x^4+112x^3+24x^2$, so the $c_{r,s}$ of the theorem are given (after multiplication by $63$) by the table
\begin{center}
\begin{tabular}{|c|c|c|c|c|c|c|}
\hline
$r$ & 8 & 7 & 6 & 5 & 4 & 3  \\ 
$s$ & 3  & 4 & 5 & 6 & 7 & 8 \\ \hline
$63c_{r,s}$ & 14 & 42 & 75 & 95 & 84 & 42 \\ \hline
\end{tabular}
\end{center}

The relations in the theorem, divided by $-1$, become
$$28Z_{8,3}+20Z_{6,5}-42Z_{4,7}\equiv 0\quad \textrm{mod }Z_{11}.$$
In the double zeta value realization, this can be written as 
\begin{align}\label{eq:wt11}
28\zeta(8,3)+20\zeta(6,5)-42\zeta(4,7)=-3\zeta(11).
\end{align}
\end{example}

\begin{example}\label{exam:wt15}
The space $W^-_{16}$ is $2$-dimensional, spanned by the even period polynomial $p(x)=x^{14}-1$ and $2x^{12}-7x^{10}+11x^8-11x^6+7x^4-2x^2$. For the latter, we have $p'(x+1)=24x^{11}+264 x^{10}+1250 x^9+3330 x^8+5488 x^7+5824 x^6+4050 x^5+1850 x^4 +528 x^3+72 x^2$, so the $c_{r,s}$ of the theorem are given (after multiplication by $\frac{429}{2}$) by the table
\begin{center}
\begin{tabular}{|c|c|c|c|c|c|c|c|c|c|c|}
\hline
$r$ & 12 & 11 & 10 & 9 & 8 & 7 & 6 & 5 & 4 & 3  \\ 
$s$ & 3  & 4 & 5 & 6 & 7 & 8 & 9 & 10 & 11 & 12\\ \hline
$\frac{429}{2}c_{r,s}$ & 66 &198 & 375 & 555 & 686 & 728 & 675 & 555 & 396 &198 \\ \hline
\end{tabular}
\end{center}

The relations in the theorem, divided by $-6$, become
$$22Z_{12,3}+30Z_{10,5}+7Z_{8,7}-20Z_{6,9}-33Z_{4,11}\equiv 0\quad \textrm{mod }Z_{15}.$$
In the double zeta value realization, this can be written as 
\begin{align}
22\zeta(12,3)+30\zeta(10,5)+7\zeta(8,7)-20\zeta(6,9)-33\zeta(4,11)=-3\zeta(15).
\end{align}
\end{example}

\begin{remark}
The reason why we do not consider the even period polynomials $x^{10}-1$ and $x^{14}-1$ in Example \exam{wt11} and Example \exam{wt15} is that they will always give us the trivial relation.
\end{remark}

\section{Relation to Zagier's problem}\label{Zagier}

Let us see how our relations \eqn{rel1} and \eqn{rel2} are related to Zagier's matrix $\cB_K$. Let $k=2K+1$. In \cite{Zag1}, Zagier obtained the following relation, which is first predicted by Euler without giving an explicit formula in \cite{Euler}.
\begin{align}\label{eq:bk}
&\zeta(k-2m-1,2m+1) \nonumber \\ 
&=\sum_{n=1}^K \bigg[\delta_{n,m}+\delta_{n,K}-{2n\choose 2m}-{2n\choose 2K-2m-1}\bigg]\zeta(2n+1)\zeta(k-2n-1),
\end{align}
where $\zeta(0)=-\frac{1}{2}$ by convention.

Let $\cB_K$ be the matrix coming from the above relations \eqn{bk}. For example, when $k=11$, the above relations can be written as
\begin{align*}
\begin{pmatrix}
\zeta(2,9)\\
\zeta(4,7)\\
\zeta(6,5)\\
\zeta(8,3)\\
\zeta(10,1)
\end{pmatrix}=
\begin{pmatrix}
    	-2 &   -4   & -6 &   -8&27\\
 	0  &  -4  & -20  & -84 &   \frac{329}{2}   \\
  	 0   &  0  & -21 & -126&  \frac{461}{2} \\
        0   & -6  & -15  & -36 & 82\\
      -1 &   -1 &   -1   & -1&5
\end{pmatrix}
\begin{pmatrix}
\zeta(8)\zeta(3)\\
\zeta(6)\zeta(5)\\
\zeta(4)\zeta(7)\\
\zeta(2)\zeta(9)\\
\zeta(11)
\end{pmatrix}=\cB_5
\begin{pmatrix}
\zeta(8)\zeta(3)\\
\zeta(6)\zeta(5)\\
\zeta(4)\zeta(7)\\
\zeta(2)\zeta(9)\\
\zeta(11)
\end{pmatrix}.
\end{align*}

Let us consider the following submatrix $\cB^{(1)}_5$ of $\cB_5$.
\begin{center}
\begin{tikzpicture}
        \matrix [matrix of math nodes,left delimiter=(,right delimiter=)] (m)
        {
    	-2 &   -4   & -6 &   \ -8&27\\
 	\ \ \ 0  &  -4  & -20  & -84 &   \frac{329}{2}   \\
   	\ \ \ 0   &  \ 0  & -21 & -126&  \frac{461}{2} \\
        	\ \ \ 0   & -6  & -15  & -36 & 82\\
     	 -1 &   -1 &   -1   & -1&5\\
        };  
        \draw[thick,dashed] (m-1-1.north west) -- (m-1-4.north east) -- (m-4-4.south east) -- (m-4-1.south west) -- (m-1-1.north west);
    \end{tikzpicture}
\end{center}

Since this submatrix $\cB_5^{(1)}$ corresponds to the linear expressions of $\{\zeta(8,3), \zeta(6,5),\zeta(4,7),\zeta(2,9)\}$ in terms of $\{\zeta(8)\zeta(3),\zeta(6)\zeta(5),\zeta(4)\zeta(7),\zeta(2)\zeta(9)\}$ up to scalar multiples of $\zeta(11)$, the relation \eqn{wt11} can be translated into the fact that the vector $(0,-42,20,28)$ lies in the left kernel of $\cB^{(1)}_5$. In general, the above argument proves the following statement.
\begin{proposition}
Let $k=2K+1$ be an odd integer. Let $\cB_K^{(1)}$ be the $(K-1)\times(K-1)$-minor of $\cB_K$ obtained by deleting the last columns and the last row of $\cB_K$. Then the vectors obtained from the coefficients of $Z_{r,s}$ in the linear relations \eqn{rel1} and \eqn{rel2} belong to the left kernel of $\cB_K^{(1)}$. 
\end{proposition}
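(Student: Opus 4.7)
The plan is to translate the formal identities \eqn{rel1} and \eqn{rel2} into numerical identities via the double zeta realization and then match them against Euler's decomposition \eqn{bk}; this is exactly the computation already carried out for $k=11$ in the paragraphs above, and what remains is to check that the indexing works uniformly in $K$.

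First I would fix conventions. Write $k=2K+1$, index the rows of $\cB_K$ by the even first entries $r=2,4,\dots,2K$ of $\zeta(r,k-r)$, and index its columns by the odd first entries $r'=3,5,\dots,2K-1$ of $\zeta(k-r')\zeta(r')$, together with a final column recording the coefficient of $\zeta(k)$. With this convention, $\cB_K^{(1)}$ is obtained by removing the row $r=2K$ (the Euler expansion of $\zeta(k-1,1)$) together with the $\zeta(k)$-column, and for each even $r$ with $2\le r\le k-3$ formula \eqn{bk} reads
$$\zeta(r,k-r)\equiv\sum_{r'\text{ odd}}\bigl(\cB_K^{(1)}\bigr)_{r,r'}\,\zeta(k-r')\zeta(r')\pmod{\Q\,\zeta(k)}.$$

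Next, take any relation produced by Theorem \thm{1} or Theorem \thm{2}: in $\cD_k$ it reads $\sum_r a_r Z_{r,k-r}\equiv 0\pmod{Z_k}$ with $a_r$ supported on even $r$ in $[4,k-3]$. The double zeta realization yields the numerical identity $\sum_r a_r\zeta(r,k-r)=\lambda\zeta(k)$ for some $\lambda\in\Q$. Pad the coefficients by setting $a_2=0$ to form a row vector $\vec a$ of length $K-1$ indexed by $r=2,4,\dots,2K-2$, and substitute the Euler decomposition above for each $\zeta(r,k-r)$ to rewrite the identity as
$$\sum_{r'\text{ odd}}\bigl(\vec a^{\,T}\cB_K^{(1)}\bigr)_{r'}\,\zeta(k-r')\zeta(r')\equiv 0\pmod{\Q\,\zeta(k)}.$$
Equating coefficients of each product then yields $\vec a^{\,T}\cB_K^{(1)}=0$, which is precisely the left-kernel statement.

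The main obstacle is that the coefficient-equating step demands the $\Q$-linear independence of $\{\zeta(k-r')\zeta(r'):r'=3,5,\dots,2K-1\}$ modulo $\Q\,\zeta(k)$, an open transcendence question. I would sidestep it by running the argument formally throughout: Theorems \thm{1} and \thm{2} are formal identities in $\cD_k/\langle Z_k\rangle$ by construction, and \eqn{bk} lifts to $\cD_k$ as a consequence of the double-shuffle presentation used in \cite{Zag1}, so after substitution one obtains a $\Q$-linear combination of the $P_{k-r',r'}$ vanishing modulo $Z_k$. The $\Q$-linear independence of these $P$'s in $\cD_k/\langle Z_k\rangle$ is a purely formal statement about the presentation \eqn{dzv1}--\eqn{dzv2}, and it turns the left-kernel claim into a rational matrix identity with no transcendence input.
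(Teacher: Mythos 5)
Your first two paragraphs coincide with the paper's own argument, which consists of the $k=11$ computation followed by the sentence ``the above argument proves the following statement'': one substitutes Euler's decomposition \eqn{bk} into the realized relation and reads off the left-kernel condition, and your indexing conventions (rows $r=2,4,\dots,2K$, removal of the $\zeta(k-1,1)$ row and the $\zeta(k)$ column, padding with $a_2=0$) correctly match the displayed $\cB_5^{(1)}$. You are also right to flag that the coefficient-equating step silently requires the $\Q$-linear independence of the products $\zeta(k-r')\zeta(r')$ modulo $\Q\zeta(k)$, which is an open transcendence question; the paper's Section \ref{Zagier} glosses over exactly this point.

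The proposed repair, however, still has a genuine gap. First, the assertion that \eqn{bk} lifts to an identity in $\cD_k$ is plausible (Euler's partial-fraction derivation is essentially a double-shuffle manipulation) but is not checked: Zagier states \eqn{bk} as an identity of real numbers, and one must verify that his proof uses only the presentation \eqn{dzv1}--\eqn{dzv2}. Second, and more seriously, the linear independence of the $P_{k-r',r'}$ in $\cD_k/\langle Z_k\rangle$ is \emph{not} ``a purely formal statement'' in the sense of being automatic from the presentation: the relations \eqn{dzv1}--\eqn{dzv2} can and do force nontrivial combinations of the $P_{i,j}$ into $\Q Z_k$ --- in even weight the $P_{\mathrm{ev},\mathrm{ev}}$ span a space of dimension only $\dim W_k^-$, far smaller than their number, which is precisely Theorem 3 of \cite{GKZ}. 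For odd weight the independence you need is true, but it amounts to the dimension count for $\cD_k$ in odd weight and requires a proof or a citation; as written it is an unproved hypothesis on which your whole deduction rests. The unconditional route the paper actually has available is different: in Section \ref{proof3} the coefficient vectors of \eqn{rel1} and \eqn{rel2} are identified, via the invertible matrices $D_i^{(k)}B_i^{(k)}$, with the images of Zagier's explicit map $W_{2K}^+\oplus W_{2K+2}^-\to\ker(\cB_K^{(1)})$, and Zagier establishes membership in that kernel by a direct binomial-coefficient computation, with no independence input about zeta values or about $\cD_k$. To complete your argument you should either supply the odd-weight dimension computation for $\cD_k$ or switch to this identification.
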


Now let us see how to use our relation \eqn{wt11} to get Zagier's relation
\begin{align}\label{eq:wt11Zag}
-6\zeta(10,1)+17\zeta(8,3)+13\zeta(6,5)-27\zeta(4,7)+3\zeta(2,9)=0.
\end{align}
Or more generally for any weight $k=2K+1\geq 11$, let us see how to use our relations from Theorem \thm{1} and Theorem \thm{2} to get nontrivial elements in the left kernel of $\cB_K$ (i.e., kernel of $\cB^t_K$).

We have already known from \eqn{bk} the expression of $\zeta(r,s)$ in terms of linear combination of products of single zeta values when $r$ is even. By a direct computation, we can see that the following definition of canonical relations indeed give us relations between double zeta values of odd weight.

\begin{definition}
For any odd integer $k\geq 5$, we call the following relation \textit{the canonical relation in weight $k$}.
\begin{align}\label{eq:canon}
2(k-2)\zeta(k-1,1)+\sum_{\substack{r+s=k\\r\textrm{ even}}}(r-s)\zeta(r,s)-(k-2)\zeta(2,k-2)=-\frac{3}{4}(k-3)\zeta(k)
\end{align}
\end{definition}

\begin{example}
The first few canonical relations in lower weights are listed below.
\begin{align*}
-\frac{3}{2}\zeta(5)&=6\zeta(4,1)-3\zeta(2,3);\\
-3\zeta(7)&=10\zeta(6,1)-\zeta(4,3)-5\zeta(2,5);\\
-\frac{9}{2}\zeta(9)&=14\zeta(8,1)+3\zeta(6,3)-\zeta(4,5)-7\zeta(2,7);\\
-6\zeta(11)&=18\zeta(10,1)+5\zeta(8,3)+\zeta(6,5)-3\zeta(4,7)-9\zeta(2,9);\\
-\frac{15}{2}\zeta(13)&=22\zeta(12,1)+7\zeta(10,3)+3\zeta(8,5)-\zeta(6,7)-5\zeta(4,9)-11\zeta(2,11);\\
-9\zeta(15)&=26\zeta(14,1)+9\zeta(12,3)+5\zeta(10,5)+\zeta(8,7)-3\zeta(6,9)-7\zeta(4,11)-13\zeta(2,13).
\end{align*}
\end{example}

In particular, in weight $11$, we have both the canonical relation in weight $11$ and our relation \eqn{wt11}
\begin{align*}
-6\zeta(11)&=18\zeta(10,1)+5\zeta(8,3)+\zeta(6,5)-3\zeta(4,7)-9\zeta(2,9);\\
-3\zeta(11)&=28\zeta(8,3)+20\zeta(6,5)-42\zeta(4,7).
\end{align*}

Now we can easily see that by cancelling $\zeta(11)$ from the above two relations, we get exactly Zagier's relation \eqn{wt11Zag}
\begin{align*}
-6\zeta(10,1)+17\zeta(8,3)+13\zeta(6,5)-27\zeta(4,7)+3\zeta(2,9)=0.
\end{align*}

In general, for any odd integer $k=2K+1\geq 11$, by cancelling $\zeta(k)$ from both canonical relation of weight $k$ and some weight $k$ relation obtained from Theorem \thm{1} or Theorem \thm{2}, we will get a nontrivial element in the left kernel of $\cB_K$.


\section{Proof of Theorem 3}\label{proof3}

In this section, we will prove that the linear relations obtained from Theorem \thm{1} and Theorem \thm{2} are all linearly independent.

The proof is based on the following injective map proved by Zagier (c.f. \cite{Zag1} (41)).
\begin{eqnarray}
W_{2K}^+\oplus W_{2K+2}^-\to \ker(\cB_K).
\end{eqnarray}
According to the definition of the above map, the image of $W_{2K}^+\oplus W_{2K+2}^-$ always has zero as the last entry. Hence it naturally defines an injective map
\begin{eqnarray}\label{eq:image}
W_{2K}^+\oplus W_{2K+2}^-\to \ker(\cB_K^{(1)}).
\end{eqnarray}

We need the following lemma to relate the linear relations with the image of the above map.

\begin{lemma}
$L_1$ and $L_2$ in Corollary 1 can also be computed as follows.
\begin{eqnarray}
L_1&=&L_1':=\frac{p(Y,X+Y)(X+Y)-p(Y,-X+Y)(-X+Y)}{2}\\
L_2&=&L_2':=\frac{p'(Y,X+Y)-p'(Y,-X+Y)}{2}
\end{eqnarray}
\end{lemma}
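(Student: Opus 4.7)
Plan:

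The plan is to prove both equalities by direct expansion of the right-hand sides, using only the symmetries that $p$ inherits from lying in $\ker(1+S)\cap\ker(1+U+U^2)$. The preliminary observation is that combining $p(X,Y)=\pm p(Y,X)$ (plus for $W_k^+$, minus for $W_k^-$) with $p(X,Y)+p(-Y,X)=0$ forces $p$ to be odd in each variable separately for $p\in W_k^+$, and even in each variable separately for $p\in W_k^-$; differentiating the latter shows $p'(X,Y)=\partial_X p(X,Y)$ is odd in its first argument and even in its second.

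For Type I, I would expand $L_1'$ by distributing $(X+Y)$ and $(Y-X)$, apply symmetry to rewrite each $p(Y,\cdot)$ as $p(\cdot,Y)$, and compare term-by-term with the expansion of $L_1$ from Corollary 1. Grouping the difference $L_1'-L_1$ by the outer power of $X$ versus $Y$, the $Y$-coefficient cancels immediately, while the $X$-coefficient reduces to the four-term bracket
\begin{align*}
p(X+Y,X)+p(Y-X,-X)+p(X+Y,Y)+p(Y-X,Y).
\end{align*}
To show this vanishes I would invoke the three-term relation $p(U,V)+p(U-V,U)+p(V,V-U)=0$ at $(U,V)=(X+Y,Y)$ and at $(U,V)=(Y-X,Y)$; the first gives $p(X+Y,X)=p(X,Y)-p(X+Y,Y)$, while the second, after using oddness in the second variable to replace $p(Y-X,-X)$ by $-p(Y-X,X)$, gives $p(Y-X,-X)=-p(Y-X,Y)-p(X,Y)$. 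Substituting these into the bracket makes it telescope to zero.

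For Type II the strategy is structurally identical, with $p'$ in place of $p$. The new ingredient is that, since the three-term relation is a polynomial identity, it can be differentiated with respect to $X$ and with respect to $Y$; converting each resulting occurrence of $(\partial_2 p)(a,b)$ back into $-p'(b,a)$ via antisymmetry of $p$ yields two linear identities that express $p'(Y,X+Y)$ and $p'(Y,Y-X)$ as $\Q$-linear combinations of $p'$ evaluated at arguments of the form $(X\pm Y,\cdot)$ together with $p'(Y,X)$. Substituting these into $L_2'$, the spurious $p'(Y,X)$ terms cancel, and after using oddness of $p'$ in its first argument to replace $p'(Y-X,Y)$ by $-p'(X-Y,Y)$, the expression collapses exactly to the form of $L_2$ obtained in the remark following the proof of Theorem \thm{2}. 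The main obstacle I anticipate is in this Type II step: one must locate the right form of the three-term relation to differentiate (under the $W_k^-$ parities it simplifies to $p(X+Y,Y)=p(X,Y)+p(X+Y,X)$, and this sharper identity gives the cleanest path), and the chain-rule signs combined with the antisymmetry-induced conversions between $\partial_1 p$ and $\partial_2 p$ are where sign errors are most likely to creep in.
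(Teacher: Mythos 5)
Your proposal is correct and follows essentially the same route as the paper's proof: both reduce $L_1'-L_1$ (after cancelling the $Y$-part by symmetry) to a bracket killed by an instance of the three-term relation together with the parity of $p$ in each variable, and both handle Type II by differentiating the three-term relation and using (anti)symmetry to convert $\partial_2 p$ into $p'$ of swapped arguments, leaving only $p'(Y,X)-p'(Y,-X)=0$. The differences are only in which specialization of the three-term relation is invoked and the order of substitutions.
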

\begin{proof}[Proof of Lemma 1]
\begin{itemize}
\item \textbf{Type I: }For any $p\in W^+_k$, we have $p(X,Y)=p(Y,X)$, therefore $p(X+Y,Y)Y=p(Y,X+Y)Y$ and $p(-X+Y,Y)Y=p(Y,-X+Y)Y$. Then
\begin{eqnarray*}
&&2(L_1'-L_1)\\
&=&X\bigg(\big(p(Y,X+Y)+p(X+Y,X)\big)+\big(p(Y,-X+Y)+p(Y-X,-X)\big)\bigg)\\
&=&X(p(X,Y)+p(-X,Y))\\
&=&X(p(X,Y)-p(X,Y))\\
&=&0.
\end{eqnarray*}
\item \textbf{Type II: }For any $p\in W^-_k$, we have 
$$p(X,X+Y)+p(X+Y,Y)+p(Y,X)=0.$$
Taking partial derivatives with respect to $Y$ term by term, we have
\begin{eqnarray}
\frac{\partial}{\partial Y}p(X,X+Y)&=&-\frac{\partial}{\partial Y}p(Y+X,X)=-p'(Y+X,X),\\
\frac{\partial}{\partial Y}p(X+Y,Y)&=&p'(X+Y,Y)-p'(Y,X+Y),\\
\frac{\partial}{\partial Y}p(Y,X)&=&p'(Y,X).
\end{eqnarray}
Summing over all the three terms above, we get
\begin{eqnarray}
p'(Y,X+Y)-p'(X+Y,Y)+p'(X+Y,X)=p'(Y,X).
\end{eqnarray}
Therefore,
\begin{eqnarray*}
&&2(L_2'-L_2)\\
&=&\bigg(p'(Y,X+Y)-p'(X+Y,Y)+p'(X+Y,X)\bigg)\\
&&-\bigg(p'(Y,-X+Y)-p'(-X+Y,Y)+p'(-X+Y,-X)\bigg)\\
&=&p'(Y,X)-p'(Y,-X)\\
&=&0.
\end{eqnarray*}
\end{itemize}
Hence we have proven the lemma.
\end{proof}

Now we are ready to prove the linear independence of the rational linear relations obtained from Theorem \thm{1} and Theorem \thm{2}.
\begin{proof}[Proof of Theorem 3]
First, we will relate those rational linear relations to the images (\ref{eq:image}) computed by Zagier.
\begin{itemize}
\item \textbf{Type I: }
For any $p\in W^+_k$, let us assume that $p(X,Y)=\sum_{r\textrm{ odd}} \alpha_{r,s}X^rY^s$. Notice that in this case, we have $r+s=k-2$.
\begin{eqnarray*}
L_1'&=&\frac{p(Y,X+Y)(X+Y)-p(Y,-X+Y)(-X+Y)}{2}\\
&=&\frac{1}{2}\bigg(\sum_{r\textrm{ odd}} \alpha_{r,s}Y^r(X+Y)^{s+1}-\sum_{r\textrm{ odd}} \alpha_{r,s}Y^r(-X+Y)^{s+1}\bigg)\\
&=&\sum_{r\textrm{ odd}}\sum_{i\textrm{ odd}} \alpha_{r,s}{s+1\choose i}X^iY^{k-1-i}\\
\end{eqnarray*}
Let us define two $\frac{k-2}{2}\times \frac{k-2}{2}$ matrices $D_1^{(k)}$ and $B_1^{(k)}$ by 
\begin{eqnarray}
(D_1^{(k)})^{-1}=\mathrm{diag}\bigg({k-1\choose 2i-1}\bigg),\qquad (B_1^{(k)})_{ij}={2j\choose 2i-1}.
\end{eqnarray}
By the above computation, we can see that left multiplication by $B_1^{(k)}$ of $(\alpha_{r,s})^T$ gives us a renormalization by  a factor of ${s+1\choose i}$ and further left multiplication by $D_1^{(k)}$ gives us the obvious renormalization by binomial coefficients. Therefore, the column vectors $D_1^{(k)}B_1^{(k)}(\alpha_{r,s})^T$ gives us the coefficients of the rational linear relations from Theorem \thm{1}.

\item \textbf{Type II: }
For any $p\in W^-_k$, let us assume that $p'(X,Y)=\sum_{r\textrm{ odd}} \beta_{r,s}X^rY^s$. Notice that in this case, we have $r+s=k-3$.
\begin{eqnarray*}
L_2'&=&\frac{p'(Y,X+Y)-p'(Y,-X+Y)}{2}\\
&=&\frac{1}{2}\bigg(\sum_{r\textrm{ odd}} \beta_{r,s}Y^r(X+Y)^{s}-\sum_{r,s\textrm{ odd}} \beta_{r,s}Y^r(-X+Y)^{s}\bigg)\\
&=&\sum_{r\textrm{ odd}}\sum_{i\textrm{ odd}} \beta_{r,s}{s\choose i}X^iY^{k-3-i}\\
\end{eqnarray*}
Let us define two $\frac{k-4}{2}\times \frac{k-4}{2}$ matrices $D_2^{(k)}$ and $B_2^{(k)}$ by 
\begin{eqnarray}
(D_2^{(k)})^{-1}=\mathrm{diag}\bigg({k-3\choose 2i-1}\bigg),\qquad (B_2^{(k)})_{ij}={2j\choose 2i-1}.
\end{eqnarray}
Similarly, we can see that left multiplication by $B_2^{(k)}$ of $(\beta_{r,s})^T$ gives us a renormalization by a factor of ${s\choose i}$ and further left multiplication by $D_2^{(k)}$ gives us the obvious renormalization by binomial coefficients. Therefore, the column vectors $D_2^{(k)}B_2^{(k)}(\beta_{r,s})^T$ gives us the coefficients of the rational linear relations from Theorem \thm{2}.
\end{itemize}
For a fixed odd weight $N$, according to the definition of $D_1^{(k)},D_2^{(k)},B_1^{(k)},B_2^{(k)}$, we have
\begin{eqnarray}
D_1^{(N-1)}=D_2^{(N+1)},\qquad B_1^{(N-1)}=B_2^{(N+1)}.
\end{eqnarray}
Moreover, $D_1^{(k)},D_2^{(k)}$ are always invertible diagonal matrices, and $B_1^{(k)},B_2^{(k)}$ are always invertible upper triangular matrices. The injectivity of (\ref{eq:image}) (i.e. the linear independence of $(\alpha_{r,s})^T$'s and $(\beta_{r,s})^T$'s) implies that for a fixed odd weight $N$, all the rational linear relations from Theorem \thm{1} and Theorem \thm{2} are linearly independent. Therefore, we have proven Theorem \thm{3}.
\end{proof}

\begin{remark}
Notice that the matrices $D_1^{(k)},D_2^{(k)},B_1^{(k)},B_2^{(k)}$ are similar to the ones defined in \cite{BS}.
\end{remark}

\begin{example}
Here we list two examples for the case when the weight is $11$ and $13$.
\begin{itemize}
\item $N=11$:

In this case, we only have the following linear relation of type II coming from the $W_{12}^-$.
\begin{eqnarray*}
3\zeta(11)&=0\zeta(2,9)+42\zeta(4,7)-20\zeta(6,5)-28\zeta(8,3).
\end{eqnarray*}
In this case,
$$
D_2^{(12)}=\left(\begin{array}{rrrr}
\frac{1}{9} & 0 & 0 & 0 \\
0 & \frac{1}{84} & 0 & 0 \\
0 & 0 & \frac{1}{126} & 0 \\
0 & 0 & 0 & \frac{1}{36}
\end{array}\right),\quad
B_2^{(12)}=\left(\begin{array}{rrrr}
2 & 4 & 6 & 8 \\
0 & 4 & 20 & 56 \\
0 & 0 & 6 & 56 \\
0 & 0 & 0 & 8
\end{array}\right),
$$
and we have
$$D_2^{(12)}B_2^{(12)}(4,-9,6,-1)^T=\bigg(0, \frac{1}{3}, -\frac{10}{63}, -\frac{2}{9}\bigg)^T=\frac{1}{126}(0,42,-20,-28)^T.$$

\item $N=13$:

In this case, we only have the following linear relation of type I coming from the $W_{12}^+$.
\begin{eqnarray*}
-3\zeta(13)&=0\zeta(2,11)-36\zeta(4,9)-10\zeta(6,7)+28\zeta(8,5)+24\zeta(10,3).
\end{eqnarray*}
In this case,
$$
D_1^{(12)}=\left(\begin{array}{rrrrr}
\frac{1}{11} & 0 & 0 & 0 & 0 \\
0 & \frac{1}{165} & 0 & 0 & 0 \\
0 & 0 & \frac{1}{462} & 0 & 0 \\
0 & 0 & 0 & \frac{1}{330} & 0 \\
0 & 0 & 0 & 0 & \frac{1}{55}
\end{array}\right),\quad
B_1^{(12)}=\left(\begin{array}{rrrrr}
2 & 4 & 6 & 8 & 10 \\
0 & 4 & 20 & 56 & 120 \\
0 & 0 & 6 & 56 & 252 \\
0 & 0 & 0 & 8 & 120 \\
0 & 0 & 0 & 0 & 10
\end{array}\right),
$$
and we have
$$D_1^{(12)}B_1^{(12)}(4,-25,42,-25,4)^T=\bigg(0, -\frac{12}{11}, -\frac{10}{33}, \frac{28}{33}, \frac{8}{11}\bigg)^T=\frac{1}{33}(0,-36,-10,28,24)^T.$$

\end{itemize}

\end{example}

\section{Proof of Theorem \thm{5}}\label{proof5}
In this section, we will give the proof of Theorem \thm{5}. The examples about all restricted sum with $d=3,$ $4$ will be given at the end of the section.
\begin{proof}[Proof of Theorem \thm{5}]
When $d=1$, the result is immediate from the following identities
\begin{align*}
\sum^{k-1}_{r=2}\zeta(r,k-r)=\zeta(k),\qquad \sum_{n=2}^\infty(\zeta(n)-1)=1.
\end{align*}

Without loss of generality, we may assume that $d\geq 2$. Since $\lim_{k\to\infty}\zeta(k)=1$, we only need to show the equality
\begin{align*}
\lim_{k\to\infty} \sum^{k-1}_{\substack{r=2\\r\equiv i\bmod d}}\zeta(r,k-r)=C_{d}^{(i)}:=\sum^{\infty}_{\substack{j=2\\j\equiv i\bmod d}}(\zeta(j)-1).
\end{align*}

We can rewrite this limit as
\begin{align}\label{eq:thm5}
\lim_{k\to\infty} \sum^{k-1}_{\substack{r=2 \\ r\equiv i\bmod d}}\bigg(\zeta(r,k-r)-(\zeta(r)-1)\bigg)=0.
\end{align}

Let $s=k-r$. By the definition of double zeta values, we have
\begin{align*}
\zeta(r,s)-(\zeta(r)-1)&=\frac{1}{3^r}\bigg(\frac{1}{2^s}\bigg)+\frac{1}{4^r}\bigg(\frac{1}{2^s}+\frac{1}{3^s}\bigg)+\cdots+\frac{1}{m^r}\bigg(\frac{1}{2^s}+\cdots+\frac{1}{(m-1)^s}\bigg)+\cdots\\
&< \frac{1}{3^r}(\zeta(s)-1)+\frac{1}{4^r}(\zeta(s)-1)+\cdots+\frac{1}{m^r}(\zeta(s)-1)+\cdots\\
&< (\zeta(r)-1)(\zeta(s)-1).
\end{align*}

Let us assume that $i+Nd\leq k<i+(N+1)d$ for some $N$. Taking the sum over all qualifying $r$ lying between $2$ and $k$, we get
\begin{align*}
\sum^{k-1}_{\substack{r=2 \\ r\equiv i\bmod d}}\bigg(\zeta(r,k-r)-(\zeta(r)-1)\bigg)&<\sum^{N}_{\substack{j=0 \\ 2\leq i+dj\leq k-2}} \bigg(\zeta(i+dj)-1\bigg)\bigg(\zeta(k-i-dj)-1\bigg)\\
&\leq (\zeta(2)-1)\cdot(N+1)\cdot \bigg(\zeta\bigg(\frac{k}{2}\bigg)-1\bigg)\\
&\leq (\zeta(2)-1)\cdot(N+1)\cdot (\zeta(N)-1)
\end{align*}

Since the right hand side goes to zero as $N$ goes to infinity by the following comparison
\begin{align*}
\lim_{N\to\infty}(N+1)(\zeta(N)-1)&=\lim_{N\to\infty}N(\zeta(N)-1)\\
&=\lim_{N\to\infty}N\bigg(\frac{1}{2^N}+\frac{1}{3^N}+\frac{1}{4^N}+\cdots\bigg)\\
&\leq \lim_{N\to\infty}N\bigg(\frac{1}{2^N}+\frac{1}{2^N}+\frac{1}{4^N}+\frac{1}{4^N}+\frac{1}{4^N}+\frac{1}{4^N}+\frac{1}{8^N}+\cdots\bigg)\\
&= \lim_{N\to\infty}N\bigg(\frac{1}{2^{N-1}}+\frac{1}{4^{N-1}}+\frac{1}{8^{N-1}}+\cdots\bigg)\\
&=\lim_{N\to\infty}\frac{N}{2^{N-1}-1}\\
&=0,
\end{align*}
we have shown \eqn{thm5}, i.e., we have proven Theorem \thm{5}.
\end{proof}

\begin{example}
For $d=1$, $2$, $3$, and $4$, we have the following computations.

\begin{center}
\begin{tabular}{|c|c|c|c|c|}
\hline
${\displaystyle\lim_{k\to\infty}\zeta(k)^{-1}{\displaystyle \sum^{k-1}_{\substack{r=1\\ r\equiv 0\bmod d}}}\zeta(r,k-r)}$ & $i=0$ & $i=1$ & $i=2$ & $i=3$\\ \hline
$d=1$ & 1 & & & \\ \hline
$d=2$ & $0.75$ & $0.25$ & & \\ \hline
$d=3$ & $0.22168939\ldots$ & $0.09180726\ldots$ & $0.68650334\ldots$ & \\ \hline
$d=4$ & $0.08666297\ldots$ & $0.03906700\ldots$ & $0.66333702\ldots$ & $0.21093299\ldots$ \\ \hline
\end{tabular}
\end{center}
\end{example}

\section*{Acknowledgement}
This study was funded by NSF Grant No. DMS-1401122. The author would like to thank Romyar Sharifi for introducing him a project related to this area, David Broadhurst for his numerical data, Masanobu Kaneko and Herbert Gangl for very useful comments on the first draft.



\bibliographystyle{amsplain}

%
%

\end{document}